\begin{document}
\numberwithin{equation}{section}

\newcommand{\UH}{\mathbb{H}}
\newcommand{\R}{\mathbb R}
\newcommand{\Z}{\mathbb Z}
\newcommand{\C}{\mathbb C}
\newcommand{\zbar}{\overline{z}}
\newcommand{\B}{\mathbb B}
\newcommand{\Hol}{{\sf Hol}}
\newcommand{\Aut}{{\sf Aut}(\mathbb D)}
\newcommand{\D}{\mathbb D}
\newcommand{\N}{\mathbb N}

\def\Re{\mathop{\mathtt{Re\hskip.2em}}}
\def\Im{\mathop{\mathtt{Im\hskip.2em}}}

\newcommand{\Real}{\mathbb{R}}
\newcommand{\Natural}{\mathbb{N}}
\newcommand{\Complex}{\mathbb{C}}
\newcommand{\ComplexE}{\overline{\mathbb{C}}}
\newcommand{\Int}{\mathbb{Z}}
\newcommand{\UD}{\mathbb{D}}

\newcommand{\Step}[1]{\medskip\noindent{\sc Step~#1.} }
\newcommand{\claim}[2]{\begin{itemize}\item[{\it Claim~#1.}]{\it #2}\end{itemize}}

\newcommand{\mcite}[1]{\csname b@#1\endcsname}
\newcommand{\UC}{{\partial\UD}}

\newcommand{\Moeb}{\mathrm{M\ddot ob}}

\newcommand{\diam}{\mathsf{diam}}

\theoremstyle{theorem}
\newtheorem {result} {Theorem}
\setcounter {result} {64}
 \renewcommand{\theresult}{\char\arabic{result}}

\def\dist{{\mathsf{dist}}}
\def\const{{\rm const}}
\def\id{{\sf id}}

\newcommand{\sgn}{\mathop{\mathrm{sgn}}}

\emergencystretch15pt \frenchspacing

\newtheorem{theorem}{Theorem}
\newtheorem{conjecture}{Conjecture}
\newtheorem{lemma}{Lemma}[section]
\newtheorem{proposition}[lemma]{Proposition}
\newtheorem{corollary}[lemma]{Corollary}

\theoremstyle{definition}
\newtheorem{definition}[lemma]{Definition}
\newtheorem{example}[lemma]{Example}

\theoremstyle{remark}
\newtheorem{remark}[lemma]{Remark}
\numberwithin{equation}{section}

\newcommand{\Maponto}
{\xrightarrow{\hbox{\lower.2ex\hbox{$\scriptstyle \smash{\mathsf{onto}}$}}\,}}
\newcommand{\Mapinto}
{\xrightarrow{\hbox{\lower.2ex\hbox{$\scriptstyle \smash{\mathsf{into}}$}}\,}}
\newcommand{\anglim}{\angle\lim}

\newenvironment{mylist}{\begin{list}{}%
{\labelwidth=2em\leftmargin=\labelwidth\itemsep=.4ex plus.1ex
minus.1ex\topsep=.7ex plus.3ex
minus.2ex}%
\let\itm=\item\def\item[##1]{\itm[{\rm ##1}]}}{\end{list}}

\newcommand{\di}{\,\mathrm{d}}
\newcommand{\A}{\mathbb{A}}
\newcommand{\nlim}{\text{as~$~n\to+\infty$}}
\newcommand{\moverline}[1]{\!\overline{\,#1\!}\,}

\newcommand{\supp}{\mathop{\mathsf{supp}}}
\long\def\REM#1{\relax}

\newcommand{\SHOWCORRECTIONS}{%
\newcommand{\PN}[1]{{\color{blue!80!black}##1}}%
\newcommand{\PD}[1]{{\color{green!20!gray}\sout{##1}}}%
\newcommand{\PC}[1]{{\color[rgb]{0.25,0.5,0.6}##1}}%
\newcommand{\ON}[1]{{\color{red!80!black}##1}}
\newcommand{\OD}[1]{{\color{red!20!gray}\sout{##1}}}%
\newcommand{\OC}[1]{{\color[rgb]{0.6,0.5,0.25}##1}}%
\newcommand{\RD}[1]{\textcolor{red}{##1}}%
}%

\newcommand{\HIDECORRECTIONS}{%
\newcommand{\PN}[1]{##1}
\newcommand{\PD}[1]{\relax}%
\newcommand{\PC}[1]{\relax}%
\let\ON=\PN%
\let\OD=\PD%
\let\OC=\PC%
\newcommand{\RD}[1]{##1}%
}%

\SHOWCORRECTIONS

\author[P. Gumenyuk]{Pavel Gumenyuk}
\address{P. Gumenyuk: Department of Mathematics, Politecnico di Milano, via E. Bonardi 9, 20133 Milan, Italy.}
\email{pavel.gumenyuk@polimi.it}

\author[O. Roth]{Oliver Roth}
\address{O. Roth: Department of Mathematics, University of W\"urzburg, Emil Fischer Strasse~40, 97074, W\"urzburg, Germany.} \email{roth@mathematik.uni-wuerzburg.de}

\title[On the squeezing function for finitely connected planar domains]{On the squeezing function for finitely connected planar domains}

\date{\today}

\subjclass[2010]{Primary: 30C75; Secondary: 30C35, 30C85}
\keywords{Squeezing function, planar domain, multiply connected domain, circularly slit disk, annulus, harmonic measure, logarithmic potential}

\let\le=\leqslant
\let\ge=\geqslant

\begin{abstract}
In a recent paper, Ng, Tang and Tsai (Math. Ann. 2020) have found an explicit formula for the squeezing function
of an annulus via the Loewner differential equation. Their result has led them to conjecture a corresponding formula for planar domains of any finite connectivity stating that the extremum in the squeezing function problem is achieved for a suitably chosen conformal mapping onto a circularly slit disk. In this paper we disprove this conjecture. We also give a conceptually simple potential--theoretic proof of the explicit formula for the squeezing function of an annulus which has the added advantage of identifying all extremal functions.
\end{abstract}

\maketitle

\section{Introduction}
\noindent Let  ${\Omega\subset\C^d}$, $d\ge1$, be a domain such that the class $\,\mathcal U(\Omega)$ of all injective holomorphic mappings ${f : \Omega \to \B:=\{z\in\C^d\colon |z_1|^2+\cdots+|z_d|^2<1\}}$ is not empty.
We denote by $\dist(0,\partial f(\Omega))$ the Euclidean distance of the origin $0$ from the boundary of $f(\Omega)$.
The \hbox{\textsl{squeezing function}} $S_{\Omega} : \Omega \to \R$ of the domain $\Omega$ is defined by
\begin{equation}\label{EQ_main-definition}
 S_{\Omega}(z):=\sup\big\{\dist\big(0,\partial f(\Omega)\big)\colon f\in\mathcal U(\Omega), f(z)=0\big\}, \qquad z\in \Omega.
\end{equation}

This notion was introduced in 2012 by Deng, Guang and Zhang \cite{DengGuanZhang2012} inspired by the works of Liu, Sun and Yau \cite{LiuSunYau2004, LiuSunYau2005}~(2004) and Yeung~\cite{Yeung2009}~(2009).
Squeezing functions and their properties have since been investigated by many authors; we refer to the papers [\mcite{Deng2016},\,\mcite{DengGuanZhang2016}\,--\,\mcite{FornaessWold2018}, \mcite{JooKimm2018}, \mcite{KimZhang2016}, \mcite{Nikolov2018}, \mcite{Zimmer2018}] and the references therein.

Clearly, the squeezing function is a biholomorphic invariant. Moreover, it is known~\cite[Theorem~2.1]{DengGuanZhang2012} that the supremum in the definition of the squeezing function is always attained; in other words there exists an \textsl{extremal mapping}, i.e. an injective holomorphic map $f:\Omega\to\B$ such that ${f(z)=0}$ and $\dist\big(0,\partial f(\Omega)\big)=S_\Omega(z)$.

Recently, Ng, Tang and Tsai \cite{NgTangTsai} have determined the squeezing function for an annulus $\A_r:={\{z \in \C \colon  r<|z|<1\}}$, and they have formulated a conjectural formula for the squeezing function of planar domains of higher (but finite) connectivity.
The aim of this paper is the construction of a counterexample to the conjecture of Ng, Tang and Tsai for domains of connectivity beyond two.
Moreover, we give a simple proof of their result for the doubly connected case. Unlike the approach in \cite{NgTangTsai}, which is based on the Loewner differential equation on an annulus,
we use only rather elementary potential-theoretic reasoning. An advantage of our method is that it allows us to identify \textit{all} extremal functions.

\section{Main results}
In order to state our results we briefly recall some basic concepts.
A \textsl{circularly slit disk} is a subdomain $D$ of the unit disk $\D:=\{z\in\C\colon|z|<1\}$ containing the origin such that $\overline{\D} \setminus D$ consists of the unit circle $\partial \D$ and (closed)
arcs lying on concentric circles centered at the origin. It is admissible that the arcs degenerate to points.

\begin{remark}\label{RM_canonica-map-existence}
Let $\Omega$ be a domain in $\C$ with at least one non-degenerate (i.e., different from a singleton) boundary component. It is known, see e.g. \cite{Tsuji1975, Grunsky1978, ReichWarschawski1960} that for any such domain $\Omega$ and any fixed ${z \in \Omega}$ there is always a conformal mapping of~$\Omega$ onto some circularly slit disk that takes $z$ to~$0$. If the domain $\Omega$ is \textit{finitely connected} with non-degenerate boundary components $\Gamma_0,\Gamma_1,\ldots,\Gamma_n$, say, then
for each ${z \in \Omega}$ and each ${j=0,1,\ldots,n}$ we can find a unique conformal map $f_{z,j}$ of $\Omega$ onto a circularly slit disk  normalized by ${f_{z,j}(z)=0}$,  ${f'_{z,j}(z)>0}$, and $f_{z,j}(\Gamma_j)=\partial\UD$. Note that, in general, $f_{z,j}$ does not have to admit a continuous extension to~$\partial\Omega$; writing $f(\Gamma_j)$, where $f$ is any conformal map of~$\Omega$, we mean the boundary component $C_j$ of $f(\Omega)$ that corresponds to~$\Gamma_j$ under the map~$f$ in the following sense: if $(z_k)$ is a sequence of points in $\Omega$ which converges to a point on~$\Gamma_j$, then every limit point of the sequence $(f(z_k))$ belongs to~$C_j$.
\end{remark}

In the notation and terminology of  Remark~\ref{RM_canonica-map-existence} the following conjecture was formulated
 in~\cite{NgTangTsai}.
\begin{conjecture}\label{CNJ}
Let $\Omega\subset\C$ be an $m$-connected domain without degenerate boundary components. Then for any~${z\in\Omega}$ the squeezing function $S_\Omega$ is equal to
\begin{equation}\label{EQ_conjectured}
S_\Omega(z)~=\max_{j=0,\ldots,m-1}\dist\big(0,\partial f_{z,j}(\Omega)\big).
\end{equation}
\end{conjecture}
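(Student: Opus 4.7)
The plan is to prove the identity~\eqref{EQ_conjectured} by establishing the two inequalities separately. The direction ``$\ge$'' is immediate: by Remark~\ref{RM_canonica-map-existence} each of the canonical mappings $f_{z,0},\ldots,f_{z,m-1}$ lies in $\mathcal U(\Omega)$ and sends $z$ to $0$, so each of the numbers $\dist\big(0,\partial f_{z,j}(\Omega)\big)$ is a legitimate quantity in the supremum defining $S_\Omega(z)$, and therefore so is their maximum.

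For the nontrivial direction ``$\le$'', I would fix an extremal map $f\in\mathcal U(\Omega)$, whose existence is guaranteed by \cite[Theorem~2.1]{DengGuanZhang2012}, pick $w_0\in\partial f(\Omega)$ realising the distance (so $|w_0|=S_\Omega(z)$), and let $C_{j_0}$ be the boundary component of $f(\Omega)$ containing $w_0$, corresponding to a boundary component $\Gamma_{j_0}$ of $\Omega$ in the sense of Remark~\ref{RM_canonica-map-existence}. The target becomes the pointwise estimate ${S_\Omega(z)\le\dist\big(0,\partial f_{z,j_0}(\Omega)\big)}$, namely that once the ``active'' component has been identified, the canonical circularly slit disk map $f_{z,j_0}$ is at least as squeezing as $f$. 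The natural attempt is a potential-theoretic rounding argument: compare the harmonic functions $\log|f|$ and $\log|f_{z,j_0}|$ on $\Omega$. Both have the same logarithmic singularity at $z$, and $\log|f_{z,j_0}|$ is by construction constant on each $\Gamma_j$. One would hope to deduce via the maximum principle on $\Omega\setminus\{z\}$, together with a period analysis of the conjugate differentials around the components $\Gamma_j$, an inequality of the form $\log|f|\le\log|f_{z,j_0}|+\const$ on~$\Omega$, and then to let a point approach $\Gamma_{j_0}$ to conclude.

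The main obstacle, and the one I expect the argument to founder on, is that the harmonic measures at~$z$ of the boundary components $\Gamma_0,\ldots,\Gamma_{m-1}$ are biholomorphic invariants of the pair $(\Omega,z)$ and cannot be tuned by the choice of~$f$. For $m=2$ there is essentially one free parameter left after the normalisations and the rounding argument does close up, which is presumably why the annulus result of \cite{NgTangTsai} comes out clean. For $m\ge 3$, however, the $m-1$ non-outer components of $\partial f(\Omega)$ interact nontrivially through harmonic measure on $f(\Omega)$, and there is no obvious reason why the conformally rigid concentric-arc shapes produced by $f_{z,j_0}$ should simultaneously optimise both the distance on $f(\Gamma_{j_0})$ and the placement of the remaining $m-2$ components. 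I would expect that one can in fact trade noncircularity of one slit against additional room on the extremal component, yielding an extremal map which is \emph{not} of the canonical form $f_{z,j}$. This is presumably the mechanism that the authors exploit to produce their counterexample in the body of the paper.
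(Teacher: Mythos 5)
You cannot prove this statement, and your own closing diagnosis is the correct one: formula~\eqref{EQ_conjectured} is the \emph{conjecture} of \cite{NgTangTsai}, and the main result of this paper (Theorem~\ref{TH_tripply}) is precisely that it fails for every connectivity $m\ge 3$; only the case $m=2$ survives (Theorem~\ref{TH_doubly}). Your ``$\ge$'' direction is correct and is all that holds in general. The ``$\le$'' direction is where the statement breaks, and your heuristic --- that for $m\ge 3$ the concentric-arc normalisation cannot simultaneously optimise the critical component and the placement of the remaining ones --- is the right intuition, though the paper's counterexample is more concrete than ``trading noncircularity of one slit for room on the extremal component''. The construction starts from a circularly slit disk $\Omega_0=\UD\setminus\Gamma_{x_0}$ and adjoins an extra boundary component concentrated near a point $\zeta^*$ on the negative real axis slightly closer to the origin than the slit. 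For $\Omega=\Omega_0\setminus\{\zeta^*\}$ and $z=0$ the identity is one of the canonical maps and the binding constraint is $|\zeta^*|$; Lemma~\ref{LM_phi_x} shows that the canonical map $\phi_{x^*}$ of the \emph{doubly connected parent domain} $\Omega_0$ onto $\UD\setminus\Gamma(x^*)$ pushes $\zeta^*$ strictly farther from the origin while keeping the (now off-centre, hence non-canonical) arc $\Gamma(x^*)$ farther still, so this competitor beats every $f_{0,j}$. The degenerate component is then thickened to finitely many genuine arcs using the convergence results of Theorem~\ref{TH_shrinking-conv}. Note that $\phi_{x^*}$ is not claimed to be extremal; it merely witnesses the strict inequality in~\eqref{EQ_conjectured}.

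It is also worth recording that even in the one case where the statement is true, $m=2$, the paper's proof of ``$\le$'' is not the period/maximum-principle comparison of $\log|f|$ with $\log|f_{z,j_0}|$ that you sketch. After a Schwarz-lemma reduction showing that the outer boundary of the extremal image must be $\partial\UD$, the paper writes $\log(1/|z|)=V_{\mu_*}(0)$ for a probability measure $\mu_*$ whose support is \emph{exactly} the inner boundary component of the extremal image (Corollary~\ref{CR_doubly-conn}); the estimate $\log(1/|z|)=\int\log(1/|w|)\,\mathrm{d}\mu_*(w)\le\log\big(1/S_\Omega(z)\big)$ then yields $S_\Omega(z)=|z|$, and the equality analysis (using that the support is the whole component) identifies all extremal maps as circularly-slit-disk maps. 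So the working tool is the logarithmic-potential representation of the harmonic basis, not a direct comparison of the two conformal maps; if you want to salvage something from your attempt, it is this potential-theoretic reformulation, not the conjugate-period bookkeeping, that closes the doubly connected case.
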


\noindent \hbox to\textwidth{Our main result states that this conjecture fails if the connectivity of~$\Omega$ is higher than two.}
\begin{theorem}\label{TH_tripply}
For each $m\ge3$, there exists an $m$-connected   domain~$\Omega\subset\C$ without degenerate boundary components  and a point ${z\in\Omega}$ such that~\eqref{EQ_conjectured} does not hold.
\end{theorem}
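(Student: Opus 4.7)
The plan is to construct an explicit counterexample for $m = 3$ and then extend it to larger $m$ by a perturbation argument.

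\textbf{Setup for $m=3$.} I would work with the doubly symmetric domain $\Omega := \UD \setminus \bigl( \overline{D(a,\rho)} \cup \overline{D(-a,\rho)}\bigr)$ for suitable $0 < \rho < a$ with $a+\rho < 1$, at the point $z_0 = 0$. The involution $\sigma: z \mapsto -z$ is a conformal automorphism of $\Omega$ fixing $z_0$, so by the uniqueness part of Remark~\ref{RM_canonica-map-existence} the three canonical maps $f_{0,j}$ are rigid under $\sigma$: $f_{0,0}$ must be odd, and $f_{0,2}(z) = -f_{0,1}(-z)$. Consequently the two slits of $f_{0,0}(\Omega)$ are antipodal arcs on a single concentric circle of radius $d_0$, and $\dist(0, \partial f_{0,1}(\Omega)) = \dist(0, \partial f_{0,2}(\Omega)) =: d_1$, so the conjectured right-hand side of~\eqref{EQ_conjectured} reduces to $\max(d_0, d_1)$.

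\textbf{Competitor.} To beat this I would employ the other classical canonical representation of $\Omega$: the Koebe circular mapping $g: \Omega \to \UD$ whose image is $\UD$ minus two disjoint closed round disks, normalized by $g(0)=0$ and $g\circ \sigma = -\,g$. The rigidity of this normalization forces the two holes of $g(\Omega)$ to be $K$ and $-K$ for some closed disk $K \subset \UD$, so $\dist(0, \partial g(\Omega)) = \dist(0, K)$. Since $g \in \mathcal{U}(\Omega)$ with $g(0)=0$, any strict inequality of the form $\dist(0, K) > \max(d_0, d_1)$ for a specific $(a,\rho)$ will prove the theorem.

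\textbf{The main obstacle} is to verify this strict inequality in an accessible regime. I would try a degenerating family in which explicit potential-theoretic formulas apply: say letting $\rho \to 0$ with $a$ fixed, so that $\Omega$ degenerates to the twice-punctured disk $\UD \setminus \{\pm a\}$ and each of the maps $f_{0,0}$, $f_{0,1}$, and $g$ converges to a limit expressible through the Green's function and covering map of that surface. The odd symmetry pins both slits of $f_{0,0}$ onto a single circle whose radius can be controlled by a Schwarz--Pick estimate, while $K$ shrinks to a point whose distance from the origin is computable from the conformal radius of the competitor at $z_0$; a similar (less symmetric) potential-theoretic estimate applies to $d_1$. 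Comparing these quantities to first non-trivial order in $\rho$ should produce parameter choices where the round-hole canonical form strictly beats both slit maps. If a clean asymptotic argument proves uncooperative, one may alternatively verify the inequality numerically for a single well-chosen $(a,\rho)$, which is logically sufficient. This is where the real content lies; the remainder is formal.

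\textbf{Extension to $m \geq 4$.} Given the 3-connected counterexample $\Omega$, I would adjoin $m-3$ mutually disjoint small closed disks $E_1,\dots,E_{m-3}$ inside $\Omega$, placed away from $0$ and from the two existing holes, obtaining an $m$-connected domain $\Omega'$. As $\max_k \diam(E_k) \to 0$, the continuous dependence of $S_{\Omega'}(0)$, of $\dist(0, \partial g(\Omega'))$, and of each of the canonical distances $\dist(0,\partial f_{0,j}(\Omega'))$ ($j=0,1,2$) on $\Omega'$ in the Carath\'eodory sense preserves the strict inequality from the triply connected case once the $E_k$'s are small enough. For the new canonical maps $f_{0,j}$ with $j \geq 3$, which invert a vanishing hole to $\UC$, the image of $\Omega'$ is a near-disk about $0$ of vanishing radius, so $\dist(0, \partial f_{0,j}(\Omega')) \to 0$ and these new distances cannot exceed the competitor. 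This produces a counterexample of any desired connectivity $m \geq 3$.
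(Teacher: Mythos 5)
Your strategy has the right overall shape (exhibit one admissible competitor that strictly beats every canonical slit map, then pass to higher connectivity by shrinking the extra holes), but the proposal does not actually prove the one inequality on which everything rests. You label the verification of $\dist(0,K)>\max(d_0,d_1)$ as ``the main obstacle'' and offer only the hope that an asymptotic expansion ``should'' work, or else a numerical check; that inequality \emph{is} the content of the theorem, and it is missing. Worse, the degenerating regime you choose looks like the wrong one: as $\rho\to0$ with $a$ fixed, every injective holomorphic $f:\Omega\to\UD$ with $f(0)=0$ extends (the punctures being removable for bounded functions, injectivity surviving by the argument principle) to an injective holomorphic self-map of $\UD$ fixing $0$, so the Schwarz Lemma gives $\dist\big(0,\partial f(\Omega)\big)\le|f(\pm a)|\le a$, with equality for the identity. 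Hence in your limit the slit map $f_{0,0}=\id$ is extremal, and $d_0$, $d_1$ and $\dist(0,K)$ all tend to $a$; any advantage of the circular-domain map could only appear at second order in $\rho$, and you give no reason to believe the sign of that correction is favourable. The paper's construction deliberately avoids such a symmetric, doubly degenerating configuration: it keeps one \emph{large} inner boundary component (the circular slit $\Gamma_{x_0}$) and shrinks only the second one to a point $\zeta^*$ near $-x_0$; the competitor is the re-slitting map $\phi_{x^*}$ of the underlying annulus from a nearby base circle, and Lemma~\ref{LM_phi_x} shows that this map pushes $\zeta^*$ strictly away from the origin while the new slit stays far away --- a \emph{first-order} effect controlled by the explicit derivative inequality $f_x'(x)>1/(1-x^2)$ obtained from the Schottky--Klein prime function.

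A secondary gap concerns the passage to $m\ge4$: you invoke ``continuous dependence \dots in the Carath\'eodory sense'' of $S_{\Omega'}(0)$, of the canonical distances for $j=0,1,2$, and the vanishing of $\dist\big(0,\partial f_{0,j}(\Omega')\big)$ for $j\ge3$. None of this is off-the-shelf for multiply connected domains; the paper explicitly notes that no full analogue of Carath\'eodory's kernel theorem is available in this setting and has to prove exactly these convergence statements (Theorem~\ref{TH_shrinking-conv}, parts (C)--(E)) by a potential-theoretic argument with logarithmic potentials of the harmonic basis. So even granting a triply connected counterexample, your extension to higher connectivity rests on unproved claims. As written, the proposal identifies a plausible plan but establishes neither of its two essential ingredients.
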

Conjecture~\ref{CNJ} is however true in the doubly connected case. This is the main result of~\cite{NgTangTsai}.
The following theorem gives slightly more precise information by identifying \textit{all} extremal functions.
\begin{theorem}\label{TH_doubly}
Formula~\eqref{EQ_conjectured} holds for any doubly connected domain~$\Omega$ with at least one non-degenerate boundary component and for any ${z\in\Omega}$.
Each extremal function is a conformal map onto a circularly slit disk.
In particular, for any $r\in(0,1)$ and any~$z\in\A_r$,
\begin{equation}\label{EQ_Squeezing-for-the-annulus}
S_{\A_r}(z)=\max\{|z|\,,r/|z|\}.
\end{equation}
\end{theorem}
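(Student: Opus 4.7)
My plan is to reduce to the standard annulus by conformal invariance and then use potential theory to compare an arbitrary competitor $f$ with the two canonical maps. Since the squeezing function, the canonical maps, and the quantities $\rho_j := \dist(0,\partial f_{z,j}(\Omega))$ are all biholomorphic invariants, and every doubly connected planar domain with at least one non-degenerate boundary component is biholomorphic to either $\A_r$ ($r\in(0,1)$) or to $\D\setminus\{0\}$, I may assume $\Omega = \A_r$ and, by rotational symmetry, $z = \rho$ real with $r<\rho<1$. The punctured-disk case $\Omega = \D\setminus\{0\}$ is treated separately: any $f \in \mathcal U(\D\setminus\{0\})$ with $f(z) = 0$ extends across the puncture to a univalent $\tilde f : \D \to \D$ (the bounded singularity is removable; injectivity is preserved because the omitted value $\tilde f(0)$ is a boundary point of $f(\D\setminus\{0\})$), and the Schwarz lemma gives $\dist(0,\partial f(\D\setminus\{0\})) \le |\tilde f(0)| \le |z|$, matched by the Möbius automorphism sending $z \to 0$.

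The lower bound $S_{\A_r}(z) \ge \max(\rho_0, \rho_1)$ is automatic since $f_{z,0}, f_{z,1} \in \mathcal U(\A_r)$. To identify $\rho_j$ explicitly I use the potential-theoretic representation
$$\log|f_{z,j}(w)| \;=\; -g_{\A_r}(w,z) + (\log\rho_j)\,\omega_{1-j}(w),$$
which holds because both sides are harmonic off $z$ with matching logarithmic singularity at $z$ and matching constant boundary values on each component of $\partial\A_r$ (we have the explicit $\omega_1(w) = \log|w|/\log r$ on $\A_r$). Inserting the explicit Green's function and Robin constant of the annulus, one reads off $\rho_0 = \rho$ and $\rho_1 = r/\rho$, giving \eqref{EQ_Squeezing-for-the-annulus}.

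For the upper bound, given $f \in \mathcal U(\A_r)$ with $f(z) = 0$, set $D = f(\A_r)$, $d = \dist(0,\partial D)$, $d_j = \inf_{w'\in f(\Gamma_j)}|w'|$, so $d = \min(d_0, d_1)$. The core idea is that the harmonic function $u_j(w) := \log|f(w)| - \log|f_{z,j}(w)|$ (the simple log-pole at $z$ cancels between numerator and denominator) has boundary values $\log|f|$ on $\Gamma_j$ and $\log|f| - \log\rho_j$ on $\Gamma_{1-j}$. Since $|f_{z,j}|$ is \emph{constant} on each boundary component, the plan is to combine $u_0$ and $u_1$ via a convex combination $h := \lambda u_0 + (1-\lambda)u_1$ with $\lambda$ chosen so that the constant parts of its boundary data on $\Gamma_0$ and on $\Gamma_1$ match; the maximum principle applied to $h$, together with the Jensen identity $\log|f'(z)| = \int_{\partial\A_r}\log|f|\,d\omega_z - \gamma_{\A_r}(z)$ (valid because $f$ has a simple zero at $z$), should then collapse the two boundary constraints into the single inequality $d \le \max(\rho_0,\rho_1)$. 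Equality in the maximum principle forces $h$ to be constant, hence $f/f_{z,j}$ to be a unimodular constant for the extremal $j$; together with $f(z) = 0$ this yields $f = f_{z,j}$ up to a rotation of $\D$, proving every extremal is a conformal map onto a circularly slit disk.

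The main obstacle is the sharpness of the comparison step. Applying the maximum principle to $u_0$ or $u_1$ alone yields only the trivial bound $d \le 1/\rho_j$, because the boundary values of $\log|f|$ on each $\Gamma_k$ oscillate in $[\log d_k, 0]$ rather than being constant. Extracting the sharp bound $\max(\rho_0,\rho_1)$ will require exploiting both canonical maps simultaneously---in the convex-combination form above, or equivalently by working with a zero-free auxiliary holomorphic function such as $f^2/(f_{z,0}\,f_{z,1})$, whose modulus has constant parts involving \emph{both} $\rho_0$ and $\rho_1$ on each $\Gamma_k$. The balancing of the two boundary constraints that makes the bound sharp is precisely what also singles out the saturating maps as the canonical $f_{z,j}$.
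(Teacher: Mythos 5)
Your reduction to $\A_r$, the treatment of the punctured disk, the lower bound, and the identification $\rho_0=|z|$, $\rho_1=r/|z|$ are all fine. The problem is that the upper bound $S_{\A_r}(z)\le\max(\rho_0,\rho_1)$ --- which is the entire content of the theorem --- remains a plan, and the plan as stated does not close. Two ingredients are missing. First, you never establish that an extremal $f_*$ maps the outer boundary component onto \emph{all} of $\partial\UD$; the paper gets this by filling in the hole of $f_*(\A_r)$, mapping the filled domain conformally onto $\UD$ and invoking the Schwarz Lemma, whereupon extremality forces that map to be the identity. Without this normalization the boundary data of $\log|f|$ oscillates on \emph{both} components and every maximum-principle comparison is lossy. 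Second, the convex combination $h=\lambda u_0+(1-\lambda)u_1$ with matched constants does not deliver the sharp bound: on $\partial\A_r$ one only knows $\log d-c\le h\le -c$, so the maximum and minimum principles together yield merely $\log d\le 0$; replacing the upper estimate by the extremal property $|f'(z)|\le\max_j f_{z,j}'(z)$ leaves an uncontrolled error term proportional to $\log\bigl(f_{z,0}'(z)/f_{z,1}'(z)\bigr)$, which is not dominated by $\log\rho_j$ (for instance $f_{z,0}'(z)>(1-|z|^2)^{-1}\to\infty$ as $|z|\to1$ while $f_{z,1}'(z)$ stays bounded). So the ``balancing'' you hope for does not occur, and the equality analysis built on it is moot.

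The paper closes exactly this gap with one further potential-theoretic input: after the filling step one writes $\log(1/|f_*^{-1}(\zeta)|)=V_{\mu_*}(\zeta)-V_{\nu_*}(\zeta)$ with $\mu_*,\nu_*$ \emph{probability} measures satisfying $\supp\mu_*=\partial K$ (the inner boundary of the extremal image) and $\supp\nu_*=\partial\UD$; evaluating at $\zeta=0$ gives $V_{\nu_*}(0)=0$ and hence
$\log(1/|z|)=\int_{\partial K}\log(1/|w|)\,\di\mu_*(w)\le\log(1/d)$, i.e.\ $d\le|z|$, and equality forces $\partial K$ to lie on a circle precisely because $\mu_*$ charges every relatively open subset of $\partial K$. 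If you prefer to stay close to your own setup, a workable repair is: after the filling step (say $f_*(\Gamma_0)=\partial\UD$), apply the Poisson representation to $u_0=\log|f_*/f_{z,0}|$, which now vanishes identically on $\Gamma_0$, together with Tsuji's extremal property $u_0(z)\le0$ and the bound $\log|f_*|\ge\log d$ on $\Gamma_1$, to obtain $\log d\,\omega_1(z)\le\log\rho_0\,\omega_1(z)$ and hence $d\le\rho_0$. Either way, a structural ingredient beyond the two-sided maximum principle on $u_0,u_1$ is indispensable, and your write-up does not yet contain one.
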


The proof of (\ref{EQ_Squeezing-for-the-annulus}) in \cite{NgTangTsai} is based on a representation of conformal maps  in terms of the Loewner differential equation. It is well--known that one disadvantage of the Loewner method is its possible failure to identify all of the extremal functions. Our  proof of Theorem \ref{TH_doubly} is based on potential theory and more or less automatically gives complete description of the extremal functions.

\begin{remark}\label{RM_extremal-property}
  It is worth  mentioning that there is an infinitesimal version of Conjecture~\ref{CNJ} which in fact \textit{does} hold  for any finitely connected domain and which, incidentally, will be  one of the key ingredients for the proof of Theorem~\ref{TH_tripply}. To state this result, we fix $z\in\Omega$ and ${j\in\{0,\ldots,m-1\}}$ and consider the set of  all injective holomorphic functions $f:\Omega\to\UD$ normalized by ${f(z)=0}$, ${f'(z)>0}$ and such that $f(\Gamma_j)$ is the outer boundary of~$f(\Omega)$, i.e. $f(\Gamma_j)$ is the boundary of the unbounded component of $\C \setminus f(\Omega)$. Then the  maximum of $f'(z)$ is achieved for $f=f_{z,j}$ and only for this function.
  To prove this remarkable fact, we notice that if $f$ maximizes $f'(z)$, then according to the Schwarz Lemma, the outer boundary of $f(\Omega)$ must be the unit circle $\partial\UD$. It remains to apply the following well-known result for ${D:=f(\Omega)}$ and ${\varphi:=f_{z,j}\circ f^{-1}}$.
\end{remark}

\begin{proposition}[\protect{Tsuji \cite[Lemma~2(i) on p.\,409]{Tsuji1975}}]\label{PR_Tsuji}
Let $D\subset\UD$ be a finitely connected domain with outer boundary~$\partial\UD$ and let $\varphi$ be the conformal mapping of~$D$ onto a circularly slit disk normalized by $\varphi(0)=0$, $\varphi'(0)>0$, and $\varphi(\partial\UD)=\partial\UD$. Then $\varphi'(0)\ge1$, with $\varphi'(0)=1$ if and only if $D$ is a circularly slit disk, in which case $\varphi=\id_D$.
\end{proposition}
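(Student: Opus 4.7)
My overall plan is to reduce the inequality $\varphi'(0)\ge 1$ to a comparison of Robin constants, and then to establish that comparison by a Schwarz-reflection doubling across the outer boundary. First, by conformal invariance of the Green's function, $g_D(z,0)=g_{D'}(\varphi(z),0)$; expanding each side as $-\log|\cdot|+\gamma+O(\cdot)$ near the pole and comparing regular parts yields $\log\varphi'(0)=\gamma_{D'}-\gamma_D$, where $\gamma_D,\gamma_{D'}$ are the Robin constants of $D,D'$ at~$0$. So the proposition reduces to the potential-theoretic inequality $\gamma_{D'}\ge\gamma_D$, with equality iff $\gamma_{D'}=\gamma_D$. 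To make this concrete, I would use that on $D'$ the function $-\log|w|$ is harmonic away from~$0$ and equal to the constant $c_k:=-\log r_k$ on each slit~$\sigma_k$, giving the decomposition $-\log|w|=g_{D'}(w,0)+\sum_k c_k\,\omega_k^{D'}(w)$. Pulling back by $\varphi$, using the conformal invariance $\omega_k^{D'}(0)=\omega_k^D(0)$, and combining with the Poisson representation of the harmonic function $\log|\varphi(z)/z|$ on~$D$, one obtains
\begin{equation*}
  \log\varphi'(0)\;=\;\sum_k\int_{\gamma_k}\log\frac{r_k}{|z|}\,d\omega_D^0(z),
\end{equation*}
where $\gamma_k$ denotes the $k$-th inner boundary component of~$D$.

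The crux of the proof, and the main obstacle, is to show that this right-hand side is non-negative. A direct maximum-principle argument on $D$ does not work: $\log(r_k/|z|)$ has both signs on $\gamma_k$, and the normal derivative of $\log|\varphi|$ along $\gamma_k$ itself changes sign, since $\varphi$ folds~$\gamma_k$ two-to-one onto the arc~$\sigma_k$. My plan is to double across $\partial\UD$ by Schwarz reflection, extending $\varphi$ to a conformal map $\tilde\varphi\colon\tilde D\to\tilde D'$ between the symmetric spherical doubles
\begin{equation*}
  \tilde D\;=\;D\cup\partial\UD\cup D^*,\qquad
  \tilde D'\;=\;\ComplexE\setminus\Bigl(\bigcup_k\sigma_k\cup\bigcup_k\sigma_k^*\Bigr),
\end{equation*}
where $D^*=\{1/\bar z:z\in D\}$ and $\sigma_k^*\subset\{|w|=1/r_k\}$ is the reflection of $\sigma_k$. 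The auxiliary harmonic function $\Psi(z):=\log|z/\tilde\varphi(z)|$ is then well-defined on all of~$\tilde D$, is anti-symmetric under the reflection $z\mapsto 1/\bar z$, vanishes on $\partial\UD$, and satisfies $\Psi(0)=-\log\varphi'(0)=-\Psi(\infty)$. A comparison of the harmonic measures of~$\tilde D$ from~$0$ and from~$\infty$ on the paired boundary components $\gamma_k\subset\UD$ and $\gamma_k^*\subset\ComplexE\setminus\overline\UD$---which carry the anti-symmetric boundary values $\pm\log(|z|/r_k)$ of~$\Psi$---then yields $\Psi(0)\le 0$, equivalent to the desired $\varphi'(0)\ge 1$.

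For the equality case, if $\varphi'(0)=1$ the integral displayed above vanishes. Since $d\omega_D^0$ has full support on each~$\gamma_k$, this forces $|z|\equiv r_k$ along~$\gamma_k$, so each inner boundary component of~$D$ lies on a circle centered at the origin. Hence $D$ is already a circularly slit disk, and by the uniqueness clause of Remark~\ref{RM_canonica-map-existence}, $\varphi=\id_D$.
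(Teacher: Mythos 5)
First, a point of reference: the paper does not prove this proposition at all --- it is quoted from Tsuji with a precise citation --- so your attempt has to stand on its own. Your set-up is correct and standard: applying the harmonic-measure representation to the bounded harmonic function $u(z)=\log|\varphi(z)/z|$ (which vanishes on $\partial\UD$ and equals $\log(r_k/|z|)$ on $\gamma_k$) does give $\log\varphi'(0)=\sum_k\int_{\gamma_k}\log(r_k/|z|)\,\di\omega_D^0$, and you correctly identify that the whole difficulty is the sign of this sum, since the integrand changes sign on each $\gamma_k$.

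That difficulty, however, is not resolved by your doubling argument. The reflection $R(z)=1/\bar z$ is an anticonformal automorphism of $\tilde D$ swapping $0\leftrightarrow\infty$ and $\gamma_k\leftrightarrow\gamma_k^*$, with $\Psi\circ R=-\Psi$; combining this with the harmonic-measure representation of $\Psi(0)$ yields exactly
\begin{equation*}
\Psi(0)=\sum_k\int_{\gamma_k}\Psi\,\bigl(\di\omega_{\tilde D}(0,\cdot)-\di\omega_{\tilde D}(\infty,\cdot)\bigr),
\end{equation*}
which is nothing more than the tautology $\Psi(0)=-\Psi(\infty)$ you already had from antisymmetry. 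You assert that ``a comparison of the harmonic measures from $0$ and from $\infty$'' gives $\Psi(0)\le0$, but no such comparison is stated or proved, and no pointwise one can work: where $\log(|z|/r_k)$ changes sign on $\gamma_k$ is governed by the global conformal datum $r_k$, not by the position of $\gamma_k$ relative to $0$ and $\infty$. (The doubling can be salvaged, but only by invoking the two-point extremal property of circular-slit maps --- $\tilde\varphi$ is the canonical circular-slit map of $\tilde D$ for the pair $(0,\infty)$, and maximizing $|F'(0)|$ under the normalization $F(z)=z+O(1)$ at $\infty$ gives $\varphi'(0)^2\ge1$ --- which is a theorem of the same depth as the one being proved.) Your equality case has the same defect: since the integrand is not of one sign, the vanishing of $\sum_k\int_{\gamma_k}\log(r_k/|z|)\,\di\omega_D^0$ together with full support of $\omega_D^0$ does not force $|z|\equiv r_k$ on $\gamma_k$. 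The classical argument closes both gaps at once via a Dirichlet-integral identity: with $p=\log|\varphi|$, $q=\log|z|$, $u=p-q$, Green's formula plus the vanishing of the periods $\int_{\gamma_k}\partial_n p\,\di s=\int_{\gamma_k}\partial_n q\,\di s=0$ yields (for smooth exhausting subdomains, hence in general)
\begin{equation*}
2\pi\log\varphi'(0)=\int_D|\nabla u|^2\,\di A+\sum_k\int_{K_k}\bigl|\nabla\log|z|\bigr|^2\,\di A\;\ge\;0,
\end{equation*}
where $K_k$ is the hole bounded by $\gamma_k$; equality forces $u\equiv0$, hence $\varphi=\id_D$ and $D$ a circularly slit disk.
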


The paper is organized as follows. In Section~\ref{S_auxil} we describe the potential--theoretic tools on which our work is based, namely  harmonic measure, logarithmic potentials and conformal mappings as well as their behaviour  w.r.t.~kernel convergence. This section contains several auxiliary statements which are either new or otherwise only implicitly contained in the vast literature on the subject.  Strictly speaking, for the purpose of this paper, some of these results would only be needed for domains of connectivity two or for domains with degenerate boundary components.
However, for the sake of clarity and consistency, and in view of  potential further applications, we state and prove these auxiliary results in their natural setting for domains of any  finite connectivity. In Section~\ref{S_doubly} we prove \hbox{Theorem~\ref{TH_doubly}}. The proof is based on the doubly connected case of Theorem~\ref{TH_harmonic-measure-via-Log-potential} in Section~\ref{S_auxil}, which expresses the harmonic basis (i.e. the harmonic measures of boundary components) for a finitely connected domain~$\Omega$ in terms of logarithmic potentials of specific positive Borel measures which are supported on the individual boundary components of $\Omega$. In \hbox{Section~\ref{S_fx}} we discuss in detail the mapping properties and dependence on parameters of the canonical conformal mapping of the standard annulus $\A_r$ onto a circularly slit disk. Our treatment is based on the Schottky\,--\,Klein prime function. In the final Section~\ref{S_tripply}, these mapping properties are then  combined with the results of Section~\ref{S_auxil} to prove Theorem~\ref{TH_tripply}.

Throughout the paper we will denote by $\D(a,\rho)$ the open disk ${\{z\in\C\colon |z-a|<\rho\}}$.

\section{Harmonic measure, logarithmic potentials, and kernel convergence} \label{S_auxil}
In this section we suppose that $\Omega\subset\C$ is a \textit{bounded} finitely connected domain  with $n+1$ non-degenerate boundary components $\Gamma_j$, $j=0,1,\ldots,n$.

For $z\in\Omega$, we denote by $\omega_\Omega(z,\cdot)$ the \textsl{harmonic measure} for a point~$z\in\Omega$ relative to the domain~$\Omega$. For the definition and fundamental properties of the harmonic measure we refer the reader to \cite[\S4.3]{Ransford}.

\begin{remark}\label{RM_Perron-solution-is-continuous}
Finite sets are removable for bounded harmonic functions, see e.g. \cite[Corollary~1.5 on p.\,73]{ConwayII}. This well-known fact together with Perron's theory of the Dirichlet problem for harmonic functions \cite[Theorems~4.1.5 and~4.2.2]{Ransford} imply that under the above assumptions, for any continuous function ${\varphi:\partial\Omega\to\Real}$ there exists a unique continuous function $\psi:\overline\Omega\to\Real$ which is harmonic in~$\Omega$ and which coincides with~$\varphi$ on every non-degenerate boundary component of~$\Omega$. This function is given by the \textsl{generalized Poisson integral}
\begin{equation}\label{EQ_generalized-Poisson}
\psi(z)=\int\limits_{\partial\Omega}\varphi(\zeta)\,\omega_{\Omega}(z,\mathrm{d}\zeta)= \int\limits_{P}\varphi(\zeta)\,\omega_{\Omega}(z,\mathrm{d}\zeta)\quad\text{for all~$z\in\Omega$},
\end{equation}
where $P:=\Gamma_0\cup\Gamma_1\cup\ldots\cup\Gamma_n$ is the union of all non-degenerate boundary components of~$\Omega$.
The Maximum Principle asserts in this case, see e.g. \cite[Theorem~4.1.2]{Ransford}, that $$\max\{\psi(z)\colon z\in\overline\Omega\}=\max\{\varphi(\zeta)\colon \zeta\in P\}.$$
\end{remark}

\begin{remark}\label{RM_Greens-function}
Formula~\eqref{EQ_generalized-Poisson} leads to the following representation for the Green function $G_\Omega$ of the domain~$\Omega$,
\begin{equation}\label{EQ_Greens-function}
G_\Omega(z,w)=\log\frac1{|z-w|}\,-\int_P\log\frac1{|z-w|}\,\,\omega_{\Omega}(z,\mathrm{d}\zeta)\quad\text{for all~$z,w\in\Omega$, $z\neq w$}.
\end{equation}
For any fixed $w\in\Omega$, we will assume that $G_\Omega(\cdot,w)$ is extended to~$\partial\Omega$ by continuity.
\end{remark}

To state the first theorem of this section we need to introduce some more notation.
Denote by $K_j$ the connected component of ${\C\setminus \Omega}$ bounded by $\Gamma_j$. We adopt the convention that, unless explicitly stated otherwise, the boundary components are labelled such  that $\Gamma_0$ is the \textsl{outer boundary} of~$\Omega$, i.e. the component $K_0$ is the unbounded one.

Let $\omega_j(z):=\omega_\Omega(z,\Gamma_j)$, $j=0,\ldots,n$. Note that $\omega_j$ is the unique harmonic function in~$\Omega$ admitting a continuous extension to~$\overline\Omega$ with ${\omega_j|\Gamma_j\equiv 1}$ and ${\omega_j|\Gamma_k\equiv 0}$ for $k\neq j$.

Consider the integrals
$$
 \lambda_{jk}:=\frac{1}{2\pi}\int\limits_{\partial\mathcal D_k}\frac{\partial \omega_j}{\partial\mathrm{n}}\,\di s,\quad k=0,\ldots, n,
$$
where $\mathcal D_k$ is a Jordan domain (unbounded in case ${k=0}$) with $C^1$-smooth boundary ${\partial \mathcal D_k\subset\Omega}$ and such that $\mathcal D_k\cap\big(\Complex\setminus\Omega\big)=K_k$. Here $\partial/\partial\mathrm{n}$ stands for the derivative along the \textit{inner} normal w.r.t.~$\mathcal D_k$.
Note that
\begin{equation}\label{EQ_sum-of-periods}
 \sum_{j=0}^n\lambda_{jk}=0,\quad k=0,\ldots,n,
\end{equation}
because $\sum_{j=0}^n\omega_{j}=\omega_\Omega(\cdot,\partial\Omega)\equiv1$. Moreover, thanks to Green's formula, \begin{equation}\label{EQ_sum-of-periods-bis}
 \lambda_{j0}=-\sum_{k=1}^n\lambda_{jk},\quad j=0,\ldots,n.
\end{equation}
The functions $\omega_1,\ldots,\omega_n$ form the so-called \textsl{harmonic basis} in the domain~$\Omega$ and the numbers $\lambda_{jk}$ are known as \textsl{periods}; for more details see, e.g., \cite[\S15.1]{ConwayII}. To fix the terminology in a more precise way, we will say that $\lambda_{jk}$ is the period of~$\omega_j$ \textsl{associated with the boundary component~$\Gamma_k$}.

\begin{remark}\label{RM_harm-meas-conformal inv}
The harmonic basis and the period matrix are conformally invariant in the following sense. Let $f$ be a conformal mapping of~$\Omega$ onto another bounded domain in~$\C$. Then for any $j\in\{0,\ldots,n\}$ and all ${z\in\Omega}$,
\begin{equation}\label{EQ_harm-meas-conformal inv}
\omega_{f(\Omega)}\big(f(z),f(\Gamma_j)\big)=\omega_{\Omega}(z,\Gamma_j)=\omega_j(z).
\end{equation}
It follows easily that for any $k\in\{0,\ldots,n\}$ the period of $\omega_{f(\Omega)}\big(\,\cdot\,,f(\Gamma_j)\big)$ associated with~$f(\Gamma_k)$ equals $\lambda_{jk}$.

To establish equality~\eqref{EQ_harm-meas-conformal inv}, it is sufficient to recall that $\omega_{f(\Omega)}\big(\,\cdot\,,f(\Gamma_j)\big)$ and $\omega_{\Omega}(\cdot,\Gamma_j)$ extend continuously to the boundaries of $\Omega$ and $f(\Omega)$, respectively,  and notice that for any $k\in\{0,\ldots,n\}$ and any sequence ${(z_n)\subset\Omega}$, $\dist(z_n,\Gamma_k)\to0$ as ${n\to+\infty}$ if and only if $\dist\big(f(z_n),f(\Gamma_k)\big)\to 0$ as ${n\to+\infty}$.
\end{remark}

\begin{remark}\label{RM_period-matrix}
It is known that the \textsl{period matrix} $\Lambda_0:=[\lambda_{jk}]_{1\le j,k \le n}$ is invertible and symmetric.  The proof of this fact for smooth boundaries can be found, e.g., in \cite[Proposition~1.7 on p.\,74]{ConwayII} and \cite[p.\,39]{Nehari}. Since any finitely connected domain can be mapped conformally onto a domain with smooth boundary, the general case holds thanks to Remark~\ref{RM_harm-meas-conformal inv}.
Moreover, equalities \eqref{EQ_sum-of-periods} and~\eqref{EQ_sum-of-periods-bis} show that the extended period matrix $\Lambda:=[\lambda_{jk}]_{0\le j,k \le n}$ is symmetric as well and that for any $m\in\{0,\ldots,n\}$,
the matrix $\Lambda_m:=[\lambda_{jk}]_{j,k\in J_m}$, where $J_m:=\{0,\ldots,n\}\setminus\{m\}$, is invertible.
\end{remark}

Below we will see that the harmonic functions~$\omega_1,\ldots,\omega_n$ can be represented in terms of logarithmic potentials.
For a finite Borel measure $\mu$ with compact support in~$\C$, the \textsl{logarithmic potential} $V_\mu$ is defined by
$$V_\mu(w):=\int\log\frac{1}{|w-z|}\,\di\mu(z).$$
Note that $V_\mu$ is a harmonic function in $\C\setminus\supp\mu$. Moreover,
\begin{equation}\label{EQ_potential-asymptotic}
V_\mu(w)=|\mu|\log\frac{1}{|w|}+O(1/|w|)\quad\text{as~$w\to\infty$},
\end{equation}
where $|\mu|$ denotes for the total mass of $\mu$, i.e. $|\mu|:=\mu(\supp\mu)$.
See, e.g., \cite[Sect.\,3.1]{Ransford} for more details.

The following theorem is certainly known to the specialists. Since we have not been able to trace any suitable precise reference, we  state it here and include  a proof.
\begin{theorem}\label{TH_harmonic-measure-via-Log-potential}
 In the above notation, for each $j=1,\ldots,n$, there exist finite positive Borel measures  $\mu_{j0},\ldots,\mu_{jn}$ supported on $\Gamma_0,\ldots,\Gamma_n$, respectively, such that
\begin{equation}\label{EQ_harmonic-measure-via-Log-potential}
\omega_j(z)~=~V_{\mu_{jj}}(z)~-\sum_{\substack{0\le k\le n\\[.3ex] k\neq j}} V_{\mu_{jk}}(z)\qquad\text{for all~$z\in\Omega$}.
\end{equation}
The measures $\mu_{jk}$ are uniquely defined by $\Omega$. Moreover,
\begin{align}
\label{EQ_supp}
 &\supp\mu_{jk}=\Gamma_k,\quad k=0,\ldots,n,\\[1ex]
 \label{EQ_charges}
 &|\mu_{jk}|=|\lambda_{jk}|, \quad k=0,\ldots,n,\\[1ex]
 \label{EQ_charges-signes}
 &\lambda_{jj}>0\quad\text{and}\quad\lambda_{jk}<0~\text{~for all~$~k=0,\ldots,n$, $k\neq j$}.
\end{align}
\end{theorem}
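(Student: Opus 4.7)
The plan is to construct the measures $\mu_{jk}$ as the components of the distributional Laplacian of a globally extended $\omega_j$, and to prove identity~(\ref{EQ_harmonic-measure-via-Log-potential}) by a Liouville-type argument on $\C$. Extend $\omega_j$ to $\tilde\omega_j\colon\C\to[0,1]$ by $\tilde\omega_j\equiv 1$ on $K_j$ and $\tilde\omega_j\equiv 0$ on every $K_k$ with $k\neq j$. By Remark~\ref{RM_Perron-solution-is-continuous}, $\tilde\omega_j$ is continuous on $\C$, harmonic off $\partial\Omega$, and bounded in $[0,1]$. Since $\tilde\omega_j$ attains its global maximum $1$ on $K_j\cup\Gamma_j$, the sub-mean-value property forces $\tilde\omega_j$ to be superharmonic in a neighborhood of $\Gamma_j$; likewise $\tilde\omega_j$ attains its global minimum $0$ on $K_k\cup\Gamma_k$ for $k\neq j$, so $\tilde\omega_j$ is subharmonic near each such $\Gamma_k$. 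Consequently $\Delta\tilde\omega_j$ is a signed Radon measure on $\C$, supported on $\partial\Omega$, negative on $\Gamma_j$ and positive on $\bigcup_{k\neq j}\Gamma_k$. Define $\mu_{jj}:=-\tfrac{1}{2\pi}\Delta\tilde\omega_j\big|_{\Gamma_j}$ and $\mu_{jk}:=\tfrac{1}{2\pi}\Delta\tilde\omega_j\big|_{\Gamma_k}$ for $k\neq j$; these are positive Borel measures supported on the respective $\Gamma_k$.

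To pin down the masses, I would apply Gauss's divergence theorem on each period cycle $\partial\mathcal D_k$: for a function $u$ on $\overline{\mathcal D_k}$ whose distributional Laplacian is a finite signed measure $-2\pi\sigma$ supported in $\mathcal D_k$,
$$
\int_{\partial\mathcal D_k}\partial_{n_{\rm in}}u\,\di s \;=\; 2\pi\,\sigma(\mathcal D_k).
$$
Applied to $u=\tilde\omega_j$, whose restriction to $\partial\mathcal D_k\subset\Omega$ is $\omega_j$ (so the left side is $2\pi\lambda_{jk}$) and whose Riesz measure $\sigma=\mu_{jj}-\sum_{i\neq j}\mu_{ji}$ has mass $\sigma(\mathcal D_k)=|\mu_{jj}|\cdot\delta_{jk}-|\mu_{jk}|\cdot(1-\delta_{jk})$ in $\mathcal D_k$ (since each $\Gamma_i$ with $i\neq k$ is disjoint from $\mathcal D_k$), this gives $\lambda_{jj}=|\mu_{jj}|$ and $\lambda_{jk}=-|\mu_{jk}|$ for $k\neq j$, hence~(\ref{EQ_charges}) and~(\ref{EQ_charges-signes}) once the masses are shown to be strictly positive. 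The strict positivity and (\ref{EQ_supp}) I would obtain by the identity principle: if $\mu_{jk}$ vanished on a relatively open subset of $\Gamma_k$, then $\tilde\omega_j$ would extend harmonically across that subset, and since $\tilde\omega_j$ is constantly $\delta_{jk}$ on $K_k$ it would be constant in a full two-sided neighborhood of this arc, contradicting $\omega_j$ being non-constant on~$\Omega$.

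To obtain~(\ref{EQ_harmonic-measure-via-Log-potential}), set $\nu:=\mu_{jj}-\sum_{k\neq j}\mu_{jk}$. By construction $\Delta V_\nu=-2\pi\nu=\Delta\tilde\omega_j$ on $\C$, so $F:=\tilde\omega_j-V_\nu$ is harmonic on the whole plane. Moreover $\nu(\C)=\sum_{k=0}^{n}\lambda_{jk}=0$ by~(\ref{EQ_sum-of-periods-bis}), so by~(\ref{EQ_potential-asymptotic}) $V_\nu(z)=O(1/|z|)$ as $z\to\infty$; since $j\ge 1$, $\tilde\omega_j\equiv 0$ on the unbounded component $K_0$, so $F$ is bounded harmonic on $\C$ with $F(z)\to 0$ at infinity. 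Liouville forces $F\equiv 0$, which restricted to $\Omega$ is~(\ref{EQ_harmonic-measure-via-Log-potential}).

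Uniqueness follows by the same flux computation: any collection $(\mu'_{jk})$ satisfying the hypotheses must have $|\mu'_{jk}|=|\lambda_{jk}|$, and then $\tau:=(\mu'_{jj}-\mu_{jj})-\sum_{k\neq j}(\mu'_{jk}-\mu_{jk})$ has total mass zero and $V_\tau\equiv 0$ on $\Omega$; the maximum principle on each bounded $K_k$ (together with continuity of $V_\tau$ across $\partial\Omega$ inherited from the same continuity of each $V_{\mu_{jk}}$) and the decay at infinity on $K_0$ force $V_\tau\equiv 0$ on $\C$, so $\tau=0$, and the pairwise disjointness of the $\Gamma_k$ yields $\mu'_{jk}=\mu_{jk}$. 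The main technical hazards are the fine regularity issues: justifying that $\Delta\tilde\omega_j$ really is a \emph{finite} Radon measure (not merely a signed distribution) and handling the continuity of $V_{\mu_{jk}}$ across $\partial\Omega$ for possibly irregular boundary components; both are handled either by the super/subharmonic structure above (giving Riesz measures automatically) or, if additional regularity is desired, by approximation through smooth subdomains $\Omega^{(m)}\nearrow\Omega$ using the kernel-convergence machinery developed in the rest of Section~\ref{S_auxil}.
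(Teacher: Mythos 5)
Your argument is essentially the paper's own proof: the same extension of $\omega_j$ by $0$ and $1$ to the complementary components, the same sub/superharmonicity observation yielding the Riesz measures (you phrase it via the distributional Laplacian, the paper via Riesz's Representation Theorem), the same Gau\ss{}-theorem flux computation for the masses, the same decay-at-infinity argument killing the entire harmonic remainder, and the same extremal-value argument for \eqref{EQ_supp}. The only point worth polishing is that in the support step you should invoke the maximum (resp.\ minimum) principle --- $\tilde\omega_j$ attains its extremal value $\delta_{jk}$ at an interior point of the neighbourhood where it is harmonic --- rather than an identity-principle argument from constancy on $K_k$, since $K_k$ may have empty interior.
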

\begin{proof}
The functions~$\omega_0,\omega_1,\ldots,\omega_n$ extend harmonically to every isolated point of~$\partial\Omega$. Therefore, without loss of generality we may suppose that $\Omega$ has no degenerate boundary components, i.e. $\partial\Omega=\Gamma_0\cup\Gamma_1\cup\ldots\cup\Gamma_n$.

Fix $j\in\{1,\ldots,n\}$ and consider the function $u:\Complex\to\Real$ defined by
$$
u(z)=
 \begin{cases}
  \omega_j(z),&\text{if~$z\in\Omega$},\\
  0,          &\text{if~$z\in K_k$ with $k\in\{0,\ldots,n\}$, $k\neq j$},\\
  1,          &\text{if~$z\in K_j$}.
 \end{cases}
$$
This function is continuous in the whole plane, see e.g. \cite[Theorems~4.2.2 and~4.3.4]{Ransford}.
Clearly, $u$ is harmonic in $\C\setminus\partial \Omega$. Moreover, comparing $u(z_0)$ for points~$z_0\in\partial \Omega$ with the mean values of~$u$ over sufficiently small circles centered at~$z_0$ and taking into account that  $u(\C)\subset[0,1]$, one can easily see that $u$ is subharmonic in $\C\setminus K_j$  and superharmonic in $\C\setminus K$, where
$$
 K:=\bigcup_{\substack{0\le k\le n\\[.3ex] k\neq j}} K_k.
$$
Therefore, combining Riesz's Representation Theorem for subharmonic functions (see, e.g., \cite[Theorem~3.9 on p.\,104]{HaymanV1})  with the fact that the logarithmic potential of a finite Borel measure is harmonic in an open set~$A$ if and only if this measure vanishes on~$A$, one may conclude that there exist two uniquely defined finite positive Borel measures $\mu$ and $\nu$, supported on $\partial K_j=\Gamma_j$ and $\partial K=\bigcup_{k\neq j}\Gamma_k$, respectively, such that
$$
u=V_\mu-V_\nu+u_0,
$$
where $u_0$ is a harmonic function in~$\C$.  Put $\mu_{jj}:=\mu$ and $\mu_{jk}=\nu|_{\Gamma_k}$ for ${k=0,\ldots,n}$, $k\neq j$.

By Gau{\ss}' Theorem (see, e.g., \cite[Theorem~1.1 on p.\,83]{SaffTotik}) we have
$$
 \lambda_{jk}=
 \begin{cases}
  \hphantom{-}|\mu_{jj}|,&\text{if $k=j$},\\[.45ex]
  -|\mu_{jk}|,&\text{if~$k\in\{1,\ldots,n\}$, $k\neq j$.}
 \end{cases}
$$
This proves~\eqref{EQ_charges} and the non-strict inequalities in~\eqref{EQ_charges-signes} for $k\neq0$. The fact that $\lambda_{jk}\neq0$ for all ${k\in\{0,\ldots,n\}}$ follows from~\eqref{EQ_supp}, which we will prove below.

Applying Gau{\ss}' Theorem in the disk $\UD(0,R)$ for $R>0$ large enough, we see that $|\mu|=|\nu|$. With~\eqref{EQ_sum-of-periods} taken into account, it follows that \eqref{EQ_charges} and~\eqref{EQ_charges-signes}, again with the non-strict inequality, hold also for~${k=0}$.

Moreover, thanks to~\eqref{EQ_potential-asymptotic}, we have ${V_{\mu}(z)-V_{\nu}(z)\to0}$ as $z\to\infty$. By  construction, $u$ vanishes identically in a neighbourhood of~$\infty$. Hence, applying the Maximum Principle to the harmonic function $u_0=u-(V_{\mu}-V_{\nu})$, we see that $u_0\equiv0$. This proves~\eqref{EQ_harmonic-measure-via-Log-potential}.

To show that $\supp \mu=\Gamma_j$ suppose on the contrary that there exists a neigbourhood~$U$ of a point~$z_0\in \Gamma_j$ such that $\mu(U)=0$. Replacing $U$, if necessary, with a smaller neighbourhood, we may suppose that ${U\cap K=\emptyset}$.  Then $u$ is harmonic in~$U$; moreover, $u(z)\le 1$ for all ${z\in U}$ and $u(z_0)=1$. By the Maximum Principle $u(z)=1$ for all $z\in U$. Since $U\cap \Omega\neq\emptyset$, this contradicts the fact that $u$ is not constant in~$\Omega$. The equality $\supp \nu=\bigcup_{k\neq j}\Gamma_k$ can be proved in a similar way.
\end{proof}

\begin{remark}\label{RM_several components}
For a non-empty set $J\subset\{1,\ldots,n\}$ denote
$$\Gamma_J:=\bigcup_{j\in J}\Gamma_j \quad\text{and}\quad J^*:=\{0,\ldots,n\}\setminus J.$$
Repeating the argument of the above proof with ${u:\Complex\to[0,1]}$ defined by ${u(z):=\omega_\Omega(z,\Gamma_J)}$ if ${z\in\Omega}$, ${u(z):=1}$ if ${z\in K_j}$ for some ${j\in J}$, and ${u(z):=0}$ if ${z\in \Gamma_k}$ with ${k\in J^*}$, we can conclude that
\begin{gather}\label{EQ_several components}
\omega_\Omega(\,\cdot\,,\Gamma_J)=\sum_{j\in J}\omega_j=V_\mu-V_\nu,\\
\intertext{where}\notag
\mu:=\sum_{j\in J}\Big(\mu_{jj}\,-\!\sum_{k\in J\setminus\{j\}}\mu_{jk}\Big)=\sum_{k\in J}\Big(\mu_{kk}\,-\!\sum_{j\in J\setminus\{k\}}\mu_{jk}\Big)\quad\text{and}\quad \nu:=\sum_{j\in J}\,\sum_{k\in J^*}\mu_{jk}
\end{gather}
are \textit{positive} finite Borel measures supported on $\Gamma_J$ and $\bigcup_{k\in J^*}\Gamma_k$, respectively, with
\begin{equation}\label{EQ_several components:charges}
|\mu|\,=\,|\nu|\,=\sum_{j,k\in J}\lambda_{jk}\,=\,-\sum_{j\in J}\sum_{k\in J^*}\lambda_{jk}\,=\sum_{j,k\in J^*}\lambda_{jk}\,.
\end{equation}
(Here we have also taken into account relations~\eqref{EQ_sum-of-periods} and~\eqref{EQ_sum-of-periods-bis}.)
\end{remark}
\begin{remark}\label{RM_repres-extends-to-bndry}
It is clear from the  proof of Theorem~\ref{TH_harmonic-measure-via-Log-potential} that the representation~\eqref{EQ_harmonic-measure-via-Log-potential}, as well as formula~\eqref{EQ_several components}, are valid also on the boundary of~$\Omega$.
\end{remark}

As a simple application of Theorem~\ref{TH_harmonic-measure-via-Log-potential}, we recover the following known result, see e.g.~\cite[Proof of Lemma~6.3, pp.\,97-98]{ConwayII}.
\begin{corollary}\label{CR_radii}
Let $m\in\{0,1,\ldots,n\}$ and $z_0\in\Omega$, and let $f$ be a conformal mapping of~$\Omega$ onto a suitable circularly slit disk
$$
 D:=\UD\,\big\backslash\Big(\bigcup\limits_{\substack{0\le k\le n\\[.3ex] k\neq m}} C_k\Big)
$$
with $f(z_0)=0$, $f(\Gamma_m)=\UC$, and $f(\Gamma_k)=C_k$ for all ${k\neq m}$. Then the radii $r_k$ of the circular arcs $C_k$ satisfy the following system of linear equations:
\begin{equation}\label{EQ_radii}
  \sum\limits_{\substack{0\le k\le n\\[.3ex] k\neq m}}\lambda_{jk}\log \frac1{r_k}=\omega_j(z_0),
  \quad 0\le j\le n,~ j\neq m.
\end{equation}
\end{corollary}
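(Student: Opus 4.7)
The plan is to apply Theorem~\ref{TH_harmonic-measure-via-Log-potential} directly to the circularly slit disk~$D$ and then evaluate the resulting logarithmic potential representation at the point $w=0$, exploiting the crucial fact that each non-outer boundary component of~$D$ lies on a circle centred at the origin, on which $|\cdot|$ is constant.

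The first step is to transport the harmonic measures to~$D$ by conformal invariance. Setting $u_j:=\omega_\Omega(\,\cdot\,,\Gamma_j)\circ f^{-1}$ for $j=0,\ldots,n$, Remark~\ref{RM_harm-meas-conformal inv} tells us that $u_j$ is the harmonic measure of the boundary component~$f(\Gamma_j)$ in~$D$, that its period around~$f(\Gamma_k)$ is again~$\lambda_{jk}$, and in particular that $u_j(0)=\omega_j(z_0)$. The outer boundary of~$D$ is $f(\Gamma_m)=\UC$, so for fixed $j\neq m$ I would apply Theorem~\ref{TH_harmonic-measure-via-Log-potential} to~$D$ (after the trivial relabelling that swaps the outer-boundary index from~$0$ to~$m$) to obtain positive finite Borel measures $\tilde\mu_{jk}$, ${k=0,\ldots,n}$, with ${\supp\tilde\mu_{jk}\subset f(\Gamma_k)}$ and $|\tilde\mu_{jk}|=|\lambda_{jk}|$, such that
\[
u_j(w)\;=\;V_{\tilde\mu_{jj}}(w)\;-\sum_{k\neq j}V_{\tilde\mu_{jk}}(w),\qquad w\in D.
\]

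Next I would evaluate this identity at $w=0$. Since $\tilde\mu_{jm}$ is supported on~$\UC$, one has $V_{\tilde\mu_{jm}}(0)=|\tilde\mu_{jm}|\log 1=0$; and since for $k\neq m$ the measure $\tilde\mu_{jk}$ sits on $\{|w|=r_k\}$, one has $V_{\tilde\mu_{jk}}(0)=|\lambda_{jk}|\log(1/r_k)$. Combining this with the sign information $\lambda_{jj}>0$ and $\lambda_{jk}<0$ for $k\neq j$ from~\eqref{EQ_charges-signes}, the minus signs preceding $V_{\tilde\mu_{jk}}$ in the representation conspire with $|\lambda_{jk}|=-\lambda_{jk}$ (for $k\neq j$) to produce
\[
\omega_j(z_0)\;=\;u_j(0)\;=\;\lambda_{jj}\log\frac{1}{r_j}\,+\sum_{\substack{k\neq j\\ k\neq m}}\lambda_{jk}\log\frac{1}{r_k}\;=\sum_{k\neq m}\lambda_{jk}\log\frac{1}{r_k},
\]
which is precisely~\eqref{EQ_radii}.

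Overall the argument is quite short once Theorem~\ref{TH_harmonic-measure-via-Log-potential} is available, and I do not expect any substantive obstacle. The only delicate points, both purely notational, are the clash of index conventions between the theorem (outer boundary labelled~$0$) and the corollary (outer boundary labelled~$m$), and the need to track the absolute-value identities $|\tilde\mu_{jk}|=|\lambda_{jk}|$ consistently with the signs of the $\lambda_{jk}$'s so that the final sum assembles correctly.
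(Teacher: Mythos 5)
Your argument is correct and is essentially the paper's own proof: both reduce to the slit disk via the conformal invariance of the harmonic basis and the periods (Remark~\ref{RM_harm-meas-conformal inv}), apply Theorem~\ref{TH_harmonic-measure-via-Log-potential} there, and evaluate the potentials at the origin using that each $\tilde\mu_{jk}$ sits on a circle of radius $r_k$ (with $V_{\tilde\mu_{jm}}(0)=0$ for the outer boundary) together with $|\tilde\mu_{jk}|=|\lambda_{jk}|$ and the signs from~\eqref{EQ_charges-signes}. The only cosmetic difference is that the paper normalizes $m=0$ and $f=\id$ outright rather than carrying the general label $m$ through the computation.
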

\begin{remark}
Note that the linear system~\eqref{EQ_radii} has a unique solution because its coefficient matrix is invertible; see Remark~\ref{RM_period-matrix}.
\end{remark}
\begin{proof}[\protect{Proof of Corollary~\ref{CR_radii}}]
Since the harmonic basis and the periods $\lambda_{jk}$, ${j,k\in\{0,\ldots,n\}}$, are conformally invariant, see Remark~\ref{RM_harm-meas-conformal inv}, we may suppose that $\Omega=D$, $m=0$, and $f=\id_\Omega$. Accordingly, we have $z_0=0$.
Fix ${j\in\{1,\ldots,n\}}$ and apply Theorem~\ref{TH_harmonic-measure-via-Log-potential}. By~\eqref{EQ_harmonic-measure-via-Log-potential},
\begin{equation}\label{EQ_radii-proof1}
\omega_j(0)=|\mu_{jj}|\log\frac1{r_j}~-\sum_{\substack{1\le k\le n\\[.3ex] k\neq j}} |\mu_{jk}|\log\frac1{r_k},
\end{equation}
where we took into account that $\mu_{jk}$, $k=1,\ldots,n$, are supported on circles of radius~$r_k$ centred at the origin and that $\mu_{j0}$ is supported on~$\UC$ and hence $V_{\mu_{j0}}(0)=0$.

Taking into account~\eqref{EQ_charges} and~\eqref{EQ_charges-signes}, from~\eqref{EQ_radii-proof1} we obtain
$$
\omega_j(0)=\sum_{k=1}^n\lambda_{jk}\log\frac{1}{r_k},
$$
as desired.
\end{proof}

In the doubly connected case, which is the relevant case for the proof of Theorem~\ref{TH_doubly}, the above results allow us to deduce the following statement.

\begin{corollary}\label{CR_doubly-conn}
Let $r\in(0,1)$ and let $f$ be a conformal mapping of $\A_r$ onto a bounded domain~$\Omega$ such that $\partial\UD$ corresponds under~$f$ to the outer boundary~$\Gamma_0$ of~$\Omega$.  Then the following two assertions hold.
\begin{itemize}
\item[(A)]
 For any $\zeta\in\Omega$,
\begin{equation}\label{EQ_doubly-conn}
\log\frac{1}{|f^{-1}(\zeta)|}=V_{\mu_*}(\zeta)-V_{\nu_*}(\zeta),
\end{equation}
where $\mu_*$ and $\nu_*$ are suitable \textsl{probability} measures with $$\supp\nu_*=\Gamma_0\quad\text{and}\quad \supp\mu_*=\Gamma_1:=\partial\Omega\setminus\Gamma_0.$$
\item[(B)] Moreover, if $\Omega$ is a circularly slit disk, then $\Gamma_1\subset\big\{\zeta\colon|\zeta|=|f^{-1}(0)|\big\}$.
\end{itemize}
\end{corollary}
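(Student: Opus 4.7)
The plan is to identify the function $\zeta\mapsto\log\big(1/|f^{-1}(\zeta)|\big)$ with a scalar multiple of the harmonic measure~$\omega_1$ of the inner boundary component~$\Gamma_1$ of~$\Omega$, and then to invoke Theorem~\ref{TH_harmonic-measure-via-Log-potential} in the doubly connected case $n=1$ to express $\omega_1$ via logarithmic potentials, finally rescaling so that the resulting charges have unit mass.

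For part~(A), I would first note that, since $f^{-1}$ is holomorphic and non-vanishing on~$\Omega$, the function $h(\zeta):=\log\big(1/|f^{-1}(\zeta)|\big)$ is harmonic on~$\Omega$. On the standard annulus, the function $z\mapsto\log(1/|z|)/\log(1/r)$ is harmonic, with boundary values $1$ on $\{|z|=r\}$ and $0$ on~$\UC$, so by uniqueness it equals $\omega_{\A_r}(z,\{|z|=r\})$; combining this with the conformal invariance of harmonic measure (Remark~\ref{RM_harm-meas-conformal inv}) gives $h=\log(1/r)\,\omega_1$ on~$\Omega$. Next, I would apply Theorem~\ref{TH_harmonic-measure-via-Log-potential} (with $n=1$, $j=1$) to obtain positive Borel measures $\mu_{11}$ on~$\Gamma_1$ and $\mu_{10}$ on~$\Gamma_0$ satisfying $\omega_1=V_{\mu_{11}}-V_{\mu_{10}}$ and $|\mu_{11}|=|\mu_{10}|=\lambda_{11}$. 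The period~$\lambda_{11}$ is conformally invariant (Remark~\ref{RM_harm-meas-conformal inv}) and can therefore be computed on the model~$\A_r$: a short flux calculation along any circle $|z|=\rho\in(r,1)$ yields $\lambda_{11}=1/\log(1/r)$. Setting $\mu_*:=\log(1/r)\,\mu_{11}$ and $\nu_*:=\log(1/r)\,\mu_{10}$ then produces probability measures with the required supports and gives~\eqref{EQ_doubly-conn}.

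For part~(B), if $\Omega$ is a circularly slit disk then $\Gamma_0=\UC$ and, since $\Omega$ is doubly connected, the inner boundary $\Gamma_1$ is a single arc lying on some circle $\{|\zeta|=\rho\}$. Substituting $\zeta=0$ into~\eqref{EQ_doubly-conn}, the integrand of $V_{\nu_*}(0)$ vanishes identically on~$\UC$, so $V_{\nu_*}(0)=0$, while the integrand of $V_{\mu_*}(0)$ equals the constant $\log(1/\rho)$ on~$\Gamma_1$, so $V_{\mu_*}(0)=\log(1/\rho)$. Comparing both sides yields $\rho=|f^{-1}(0)|$, whence $\Gamma_1\subset\{|\zeta|=|f^{-1}(0)|\}$.

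The only real obstacle is the normalization bookkeeping: pinning down $\lambda_{11}=1/\log(1/r)$ so that the measures produced by Theorem~\ref{TH_harmonic-measure-via-Log-potential}, after rescaling, are probability measures. Everything else follows formally from that theorem and from the conformal invariance built into Remark~\ref{RM_harm-meas-conformal inv}.
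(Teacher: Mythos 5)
Your proof is correct and follows essentially the same route as the paper: identify $\log(1/|f^{-1}(\zeta)|)$ with $\log(1/r)\,\omega_1(\zeta)$ via conformal invariance, apply Theorem~\ref{TH_harmonic-measure-via-Log-potential} with $n=j=1$, compute $\lambda_{1,1}=1/\log(1/r)$ on the model annulus, and rescale to obtain probability measures. Your proof of~(B) by evaluating~\eqref{EQ_doubly-conn} at $\zeta=0$ is in substance the same as the paper's alternative argument via the linear system of Corollary~\ref{CR_radii}, which in the doubly connected case reduces to exactly that evaluation.
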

\begin{proof}
Apply Theorem~\ref{TH_harmonic-measure-via-Log-potential} with $n=j=1$. By Remark~\ref{RM_harm-meas-conformal inv}, ${\omega_1(\zeta)=\log|f^{-1}(\zeta)|/\log r}$ for all ${\zeta\in\Omega}$.
Thanks to the same remark, in order to find $\lambda_{1,1}$, we may suppose that $f=\id_{\A_r}$. In this way we see that the measures $\mu_{1,1}$ and $\mu_{1,0}$ in formula~\eqref{EQ_harmonic-measure-via-Log-potential} satisfy
$$
|\mu_{1,1}|=|\mu_{1,0}|=\lambda_{1,1}=\frac1{\log(1/r)}.
$$
Hence to prove (A), it remains to set ${\mu_*:=(1/\lambda_{1,1})\mu_{1,1}}$ and ${\nu_*:=(1/\lambda_{1,1})\mu_{1,0}}$.

Assertion (B) is a well known fact, see e.g. \cite[Lemma~2.2]{NgTangTsai} or \cite[Lemma~3]{ReichWarschawski1960}. Alternatively, one can use system~\eqref{EQ_radii} in Corollary~\ref{CR_radii}, which reduces in our case to the unique equation $\lambda_{1,1}\log(1/r_1)=\omega_1\big(f^{-1}(0)\big)$ and hence yields ${r_1=|f^{-1}(0)|}$.
\end{proof}

\begin{remark}
It is possible to show that the measure $\mu_*$ in Corollary~\ref{CR_doubly-conn} coincides with the so-called \textsl{Green equilibrium distribution} on $K_1$ relative to the simply connected domain $D:=\Omega\cup K_1=\C\setminus K_0$, where in accordance with the notation introduced at the beginning of this section, $K_1$ and $K_0$ stand for the bounded and unbounded connected components of $\C\setminus\Omega$, respectively. Formula~\eqref{EQ_doubly-conn} can be rewritten as
$$
\log\frac{1}{|f^{-1}(\zeta)|}=\int\limits_{\Gamma_1}G_D(\zeta,w)\di\mu_*(w),\quad \zeta\in D\setminus K_1,
$$
where $G_D$ is the Green function of the domain~$D$. For more details on Green equilibrium distributions, see e.g. \cite[Sect.\,II.5]{SaffTotik}, or \cite[p.\,94-95]{Tsuji1975} where the case~$D=\UD$ is considered.
\end{remark}

The classical Kernel Convergence Theorem due to Carath\'eodory, see e.g. \cite[Theorem~1.8 on p.\,14]{Pommerenke:BoundaryBehaviour},  relates the limit behaviour of a sequence of hyperbolic \textit{simply connected} domains with the convergence of the corresponding conformal mappings onto the canonical domain (the unit disk).
The notion of kernel convergence extends naturally to multiply connected domains, but no complete analogue of Carath\'eodory's result seems to be known  even for the finitely connected case.
Considerable progress in this direction has been made in~\cite{Binder, Comerford}. In this regard, it is worth mentioning that the canonical mappings of a multiply connected domain are closely related to its harmonic basis, see e.g. \cite[Chapter~15]{ConwayII}.
However, to the best of our knowledge, no known results apply to the special, but rather interesting, case in which the boundary consists of a non-empty constant part plus a variable part, which shrinks in the limit to a single point. The following theorem describes the limit behaviour of the harmonic measure, Green's function, and the conformal mappings onto circularly slit disks in this special case.

\begin{theorem}\label{TH_shrinking-conv}
Let $\Omega_0$ and $\Omega_n\varsubsetneq\Omega_0$, $n\in\Natural$, be bounded finitely connected domains in~$\C$ and let $\zeta^*\in \Omega_0$. Suppose that $$\sup\big\{|\zeta-\zeta^*|\colon \zeta\in\Omega_0\setminus\Omega_n\big\}\to0\quad \text{as~$~n\to+\infty$.}$$ Then the following five statements hold.
\begin{itemize}
\item[(A)] If $\varphi$ is continuous on $\partial\Omega_0$ and in a neighbourhood of~$\zeta^*$, then for all ${z\in\Omega_0\setminus\{\zeta^*\}}$,
\begin{equation}\label{EQ_shrinking-harm-meas}
\int\limits_{\partial\Omega_n}\varphi(\zeta)\,\omega_{\Omega_n}(z,\mathrm{d}\zeta)~\to~
\int\limits_{\partial\Omega_0}\varphi(\zeta)\,\omega_{\Omega_0}(z,\mathrm{d}\zeta)\quad\nlim.
\end{equation}
Moreover, for any~$\varepsilon>0$, the convergence in~\eqref{EQ_shrinking-harm-meas} is uniform in $\Omega_0\setminus\UD(\zeta^*,\varepsilon)$.
\smallskip
\item[(B)] For any~$\varepsilon>0$, the sequence $(G_{\Omega_n})$ converges  to~$G_{\Omega_0}$ uniformly in
$$
  \big\{(z,w)\colon z\in\!\overline{\,\Omega_0\!}\,\setminus \UD(\zeta^*,\varepsilon),~w\in\Omega_0\setminus \UD(\zeta^*,\varepsilon),~z\neq w\big\}.
$$
\item[(C)] Let $\Gamma$ be a non-degenerate boundary component of $\Omega_0$ and let $(z_n\in\Omega_n)$ be a sequence converging to some $z_0\in\Omega_0\setminus\{\zeta^*\}$. For each ${n\in\{0\}\cup\Natural}$, denote by $f_n$ the unique conformal mapping of $\Omega_n$ onto a circularly slit disk normalized by $f_n(z_n)=0$, $f_n'(z_n)>0$, $f_n(\Gamma)=\partial\UD$. Then the sequence~$(f_n)$ converges locally uniformly in~$\Omega_0\setminus\{\zeta^*\}$ to~$f_0$.
\smallskip
\item[(D)] For each $n\in\Natural$,  let $g_n$ be a conformal mapping of $\Omega_n$ with ${g_n(\Omega_n)\subset\UD}$. Further, suppose that for any $n\in\Natural$ the outer boundary of $g_n(\Omega_n)$ corresponds under $g_n^{-1}$ to a subset of~$\partial\Omega_n\setminus\partial\Omega_0$. If the sequence~$(\zeta_n):=\big(g_n^{-1}(0)\big)$ is contained in a compact subset of $\Omega_0\setminus\{\zeta^*\}$, then the sequence $(g_n)$ converges locally uniformly in~$\Omega_0\setminus\{\zeta^*\}$ to~$g_0\equiv0$.
\smallskip
\item[(E)] For the sequences $(f_n)$ and $(g_n)$ defined above, we have
\begin{align}
\label{EQ_shrinking-dist1}
&\dist\big(0,\partial f_n(\Omega_n)\big)\to\dist\big(0,\partial f_0(\Omega_0\setminus\{\zeta^*\})\big)
\quad\text{and} \\
\label{EQ_shrinking-dist2}
&\dist\big(0,\partial g_n(\Omega_n)\big)\to0 \quad\qquad\qquad\qquad\text{as $~n\to+\infty$.}
\end{align}
\end{itemize}
\end{theorem}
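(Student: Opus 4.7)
For (A), I would work with the Perron representation $\psi_n(z):=\int_{\partial\Omega_n}\varphi\,\omega_{\Omega_n}(z,d\zeta)$. The difference $\psi_n-\psi_0$ is harmonic on~$\Omega_n$, vanishes on $\partial\Omega_n\cap\partial\Omega_0$, and equals $\varphi-\psi_0$ on $\partial\Omega_n\setminus\partial\Omega_0$; hence $|\psi_n(z)-\psi_0(z)|\le 2\|\varphi\|_\infty\,\omega_{\Omega_n}(z,\partial\Omega_n\setminus\partial\Omega_0)$, and the task reduces to bounding this harmonic measure. With $\varepsilon_n:=\sup\{|\zeta-\zeta^*|\colon\zeta\in\Omega_0\setminus\Omega_n\}\to0$ and $R\ge\max_{z\in\overline{\Omega_0}}|z-\zeta^*|$, the superharmonic barrier $u(z):=\log(R/|z-\zeta^*|)/\log(R/\varepsilon_n)$ satisfies $u\ge1$ on $\partial\Omega_n\setminus\partial\Omega_0\subset\overline{\UD(\zeta^*,\varepsilon_n)}$ and $u\ge0$ on the rest of $\partial\Omega_n$, so the minimum principle gives $\omega_{\Omega_n}(z,\partial\Omega_n\setminus\partial\Omega_0)\le u(z)=O(1/\log(1/\varepsilon_n))$, uniformly on $\Omega_0\setminus\UD(\zeta^*,\varepsilon)$. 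Combined with continuity of $\varphi$ at $\zeta^*$, this proves (A). Assertion (B) follows by applying (A) to the boundary data $\varphi(\zeta):=\log(1/|\zeta-w|)$ together with \eqref{EQ_Greens-function}, uniformly for $w$ in compact subsets of $\Omega_0\setminus\UD(\zeta^*,\varepsilon)$.

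\textbf{Plan for (C).} Since $|f_n|<1$, $(f_n)$ is normal on $\Omega_0\setminus\{\zeta^*\}$, and any subsequential limit $f$ extends to $\Omega_0$ by Riemann removability. My plan is to use the explicit identity $\log|f_n(w)|=-G_{\Omega_n}(w,z_n)+\sum_{j}(\log r_{n,j})\,\omega_{n,j}(w)$ (sum over boundary components of $\Omega_n$ other than the one mapped to $\partial\UD$), obtained as in the proof of Corollary~\ref{CR_radii} by absorbing the log singularity of $\log|f_n|$ into $-G_{\Omega_n}$. Combining with (A) (applied also to normal derivatives along fixed contours in $\Omega_0$ to get $\lambda_{n,jk}\to\lambda_{0,jk}$), (B), and invertibility of the period matrix (Remark~\ref{RM_period-matrix}), I obtain $r_{n,j}\to r_{0,j}$ for each original boundary component, while the contribution from the shrinking slit at $K_n$ vanishes in the limit ($r_{n,K_n}$ stays bounded and $\omega_{n,K_n}(w)\to 0$ by (A)). Hence $|f_n|\to|f_0|$ locally uniformly on $\Omega_0\setminus\{\zeta^*\}$; the limit $f$ is non-constant (else $|f|\equiv 0\ne|f_0|$), so Hurwitz gives injectivity, and matching boundary values identifies $f(\Omega_0)$ as a circularly slit disk with the required normalization. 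Uniqueness in Remark~\ref{RM_canonica-map-existence} forces $f=f_0$, so the whole sequence converges.

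\textbf{Plan for (D), the main obstacle.} The family $(g_n)$ is normal on $\Omega_0\setminus\{\zeta^*\}$; pass to a subsequence with $g_n\to g$ locally uniformly and $\zeta_n\to\zeta'\in\Omega_0\setminus\{\zeta^*\}$, so that $g(\zeta')=0$. By Hurwitz, $g$ is either constant (hence $\equiv 0$) or injective, and \emph{the core difficulty is excluding the injective case}, which is where the topological hypothesis on the outer boundary must be exploited. Suppose $g$ were injective; by removability it extends to $\Omega_0$, and for any small $\delta>0$ with $\overline{\UD(\zeta^*,\delta)}\subset\Omega_0$, $g(\partial\UD(\zeta^*,\delta))$ is a Jordan curve of some positive diameter $d>0$. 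The doubly connected region $\Omega_n\cap\UD(\zeta^*,\delta)$ contains the round annulus $\{\varepsilon_n<|z-\zeta^*|<\delta\}$, so by monotonicity its conformal modulus is at least $\log(\delta/\varepsilon_n)/(2\pi)\to+\infty$. Under the univalent $g_n$, this region maps onto a doubly connected subregion of $\overline{\UD}$ with boundary components $g_n(\partial\UD(\zeta^*,\delta))$ (of diameter converging to $d>0$, by locally uniform convergence) and $C_n:=g_n(K_n)\subset\overline{\UD}$. A standard modulus bound (Gr\"otzsch-type) gives a uniform upper bound on the conformal modulus of a doubly connected region in $\overline{\UD}$ whose outer boundary component has diameter bounded below by $d/2$; this contradicts the unbounded modulus just established. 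Hence $g$ cannot be injective, so $g\equiv 0$ and the whole sequence converges to $0$.

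\textbf{Plan for (E).} For~\eqref{EQ_shrinking-dist1}, (C) and Corollary~\ref{CR_radii} give $r_{n,j}\to r_{0,j}$ for the original circular slits in $f_n(\Omega_n)$, while the radius of the slit coming from $K_n$ tends to $|f_0(\zeta^*)|$ (since $K_n\to\{\zeta^*\}$ and $f_0$ is continuous there). Taking minima yields
$\dist(0,\partial f_n(\Omega_n))\to\min\bigl(\min_{j\ne m}r_{0,j},\,|f_0(\zeta^*)|\bigr)=\dist\bigl(0,\partial f_0(\Omega_0\setminus\{\zeta^*\})\bigr)$,
using that $f_0(\zeta^*)\ne 0$ because $z_0\ne\zeta^*$. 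For~\eqref{EQ_shrinking-dist2}, (D) together with Cauchy's estimate gives $g_n'(\zeta_n)\to 0$; applying Koebe's one-quarter theorem to the univalent inverse $g_n^{-1}:g_n(\Omega_n)\to\Omega_n\subset\Omega_0$ then yields $\dist(0,\partial g_n(\Omega_n))\le 4\,\diam(\Omega_0)\,g_n'(\zeta_n)\to 0$.
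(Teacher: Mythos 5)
Your overall architecture is sound and in places genuinely different from the paper's: you estimate $\omega_{\Omega_n}(\cdot,\partial\Omega_n\setminus\partial\Omega_0)$ by a logarithmic barrier rather than by the potential-theoretic mass bound coming from Theorem~\ref{TH_harmonic-measure-via-Log-potential}, and you prove (D) first (by moduli of ring domains) and deduce \eqref{EQ_shrinking-dist2} from it, whereas the paper proves \eqref{EQ_shrinking-dist2} first (total masses $c_n\to0$ while $u_n(0)\to1$) and obtains (D) from it via Koebe. Part (A) and the second half of (E) are fine. However, there are two genuine gaps.

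In (C), the step ``the contribution from the shrinking slit vanishes because $r_{n,K_n}$ stays bounded and $\omega_{n,K_n}(w)\to0$'' is precisely the non-trivial point and is asserted, not proved. The term in question is $\log(r_{n,K_n})\,\omega_{n,K_n}(w)$; since a priori $r_{n,K_n}$ could tend to $0$, this is an indeterminate product, and a lower bound on $r_{n,K_n}$ is essentially equivalent to the non-degeneracy of the limit of $(f_n)$, i.e. to what (C) is about. Your fallback ``$f$ is non-constant, else $|f|\equiv0\ne|f_0|$'' presupposes the convergence $|f_n|\to|f_0|$ that you are trying to establish, so the argument is circular; the same unproved bound is needed to decouple the degenerating linear system of Corollary~\ref{CR_radii}, whose extra row and column tend to zero. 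The paper avoids all of this with one observation: composing $f_0$ with the M\"obius map $T_n$ taking $f_0(z_n)$ to $0$ produces a competitor for the extremal problem of Remark~\ref{RM_extremal-property}, whence $f_n'(z_n)\ge (T_n\circ f_0)'(z_n)\to f_0'(z_0)>0$; this rules out degenerate limits at once, and the limit is then identified by the uniqueness part of the extremal property together with a winding-number argument locating the outer boundary. You need either that step or an independent lower bound for $r_{n,K_n}$.

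In (D), the ``standard modulus bound'' is false as stated: the round annuli $\{s<|z|<1\}\subset\UD$ have outer boundary component of diameter $2$ and modulus tending to $+\infty$ as $s\to0$. What is true (Teichm\"uller's modulus theorem) is that a ring domain of bounded diameter whose \emph{bounded} complementary component has diameter $\ge d/2$ has modulus bounded in terms of $d$. So your contradiction works only if $g_n(\partial\UD(\zeta^*,\delta))$ bounds the \emph{bounded} complementary component of $B_n:=g_n(\{\varepsilon_n<|z-\zeta^*|<\delta\})$ --- and that is exactly where the hypothesis that the outer boundary of $g_n(\Omega_n)$ comes from $\partial\Omega_n\setminus\partial\Omega_0$ must enter (it forces $g_n$ to map $\Omega_n\setminus\overline{\UD(\zeta^*,\delta)}$ into the bounded complementary component of $B_n$). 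You correctly identify the hypothesis as the crux but never wire it into the estimate, and the estimate as written fails. A minor further point: in (B) your plan gives uniformity only for $w$ in compact subsets of $\Omega_0\setminus\UD(\zeta^*,\varepsilon)$, while the theorem asserts uniformity for all such $w$ up to $\partial\Omega_0$; the fix, as in the paper, is to observe that on $\partial\Omega_n\setminus\partial\Omega_0$ the difference $G_{\Omega_0}(\cdot,w)-G_{\Omega_n}(\cdot,w)$ equals $G_{\Omega_0}(\cdot,w)$, which is bounded there by a constant depending only on $\varepsilon$, uniformly in $w$.
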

\begin{remark}
  Using Moebius transformations it is easily seen that the above theorem holds also for unbounded domains $\Omega_0\subset\C$ with external points. A similar approach allows one to extend Theorem~\ref{TH_shrinking-conv} to the case of an unbounded domain without external points, provided it has at least one non-degenerate boundary component. In such a case, however, the argument becomes slightly more complicated because one should use a conformal map of the form $z\mapsto\sqrt{(z-a)/(z-b)}$, which does not extend
  to a one-to-one map on the boundary.
\end{remark}
\begin{proof}[\protect{Proof of Theorem~\ref{TH_shrinking-conv}}]
Denote $\Upsilon_n:=\partial\Omega_n\setminus\partial\Omega_0$. We are going to show that:\\[1ex]
\textsc{Claim 1.} For any ${\varepsilon>0}$, as ${n\to+\infty}$, $\omega_{\Omega_n}(z,\Upsilon_n)\to 0$  uniformly in $\moverline{\Omega_0}\setminus\UD(\zeta^*,\varepsilon)$.
\medskip

Denote by $P_0$ the union of all non-degenerate boundary components of~$\Omega_0$ and let~$P_n$, ${n\in\Natural}$, stand for the union of all non-degenerate connected components of~$\Upsilon_n$.
If ${P_n=\emptyset}$, then $\omega_{\Omega_n}(z,\Upsilon_n)\equiv0$. Therefore, we may suppose that ${P_n\neq\emptyset}$ for any ${n\in\Natural}$. Then Theorem~\ref{TH_harmonic-measure-via-Log-potential} together with Remark~\ref{RM_several components} implies that for each ${n\in\Natural}$,
\begin{equation}\label{EQ_harm-measure-of-Upsilon_n}
\omega_{\Omega_n}(z,\Upsilon_n)=V_{\mu_n}(z)-V_{\nu_n}(z)\quad\text{for all~$z\in\moverline{\Omega_n}$},
\end{equation}
where $\mu_n$ and $\nu_n$ are suitable positive Borel measures supported on~$P_n$ and $P_0$, respectively. Moreover,
   $${c_n:=|\mu_n|=|\nu_n|.}$$
It follows from the very definition of the logarithmic potential that
\begin{equation*}
\inf_{z\in P_n}V_{\mu_n}(z)\ge -c_n\log\diam(P_n)\quad\text{and}\quad \sup_{z\in P_n}V_{\nu_n}(z)\le -c_n\log\dist(P_0,P_n)
\end{equation*}
for all $n\in\Natural$. Taking into account that $V_{\mu_n}(z)-V_{\nu_n}(z)=1$ for all ${z\in P_n}$, we deduce that
\begin{equation}\label{EQ_c_n-to-zero}
1/c_n\ge\log\frac{\dist(P_0,P_n)}{\diam(P_n)}\to+\infty,\quad\text{i.e. $c_n\to0$,$~$ as $~n\to+\infty$.}
\end{equation}
Using~\eqref{EQ_harm-measure-of-Upsilon_n} we see that for any $\varepsilon>0$  with $\partial\UD(\zeta^*,\varepsilon)\subset\Omega_0$,  ${\omega_{\Omega_n}(\Upsilon_n,z)\to0}$ uniformly on $\partial\UD(\zeta^*,\varepsilon)$ as ${n\to+\infty}$. Now Claim~1 follows easily from the Maximum Principle, see Remark~\ref{RM_Perron-solution-is-continuous}, applied to the harmonic functions ${\omega_{\Omega_n}(\cdot,\Upsilon_n)}$ in the domain $\Omega_0\setminus\!\overline{\,\UD(\zeta^*,\varepsilon)}$.

\medskip
\noindent\textsc{Proof of (A)}. By Remark~\ref{RM_Perron-solution-is-continuous}, for each $n\in\Natural$ the function
$$
\psi_n(z)\,:=\,\int\limits_{\partial\Omega_n}\varphi(\zeta)\,\omega_{\Omega_n}(z,\mathrm{d}\zeta)~-~
\int\limits_{\partial\Omega_0}\varphi(\zeta)\,\omega_{\Omega_0}(z,\mathrm{d}\zeta)
$$
is harmonic in~$\Omega_n$ and extends continuously to the boundary of~$\Omega_n$ with $\psi_n|_{P_0}\equiv0$.
Hence, again by Remark~\ref{RM_Perron-solution-is-continuous}, for all $z\in\Omega_n$,
\begin{align*}
|\psi_n(z)|\,=\,\left|\int_{P_n}\psi_n(\zeta)\,\omega_{\Omega_n}(z,\mathrm{d}\zeta)\right|\,&\le \,\omega_{\Omega_n}(z,\Upsilon_n)\max\{|\psi(\zeta)|\colon\zeta\in P_n\}\\&\le
\,2\omega_{\Omega_n}(z,\Upsilon_n)\max\{|\varphi(\zeta)|\colon\zeta\in \partial\Omega_n\},
\end{align*}
where we have applied the triangle inequality and the Maximum Principle in order to estimate $|\psi_n|$ on~$P_n$.
Combined with Claim~1 the above inequality easily implies~(A).

\medskip
\noindent\textsc{Proof of (B)}. Let $\varepsilon>0$. Fix some $w\in\Omega_0\setminus\UD(\zeta^*,\varepsilon)$. Bearing in mind formula~\eqref{EQ_Greens-function}, consider the functions
\begin{align*}
\psi_n(z)~&:=~G_{\Omega_0}(z,w)\,-\,G_{\Omega_n}(z,w)\,=\\ &=\int\limits_{\partial\Omega_n}\log\frac{1}{|\zeta-w|}\,\,\omega_{\Omega_n}(z,\mathrm{d}\zeta)~-~
\int\limits_{\partial\Omega_0}\log\frac{1}{|\zeta-w|}\,\,\omega_{\Omega_0}(z,\mathrm{d}\zeta).
\end{align*}
For $n\in\Natural$ large enough, we have $P_n\subset\UD(\zeta^*,\varepsilon/2)$ and hence ${0\le G_{\Omega_0}(\zeta,w)\le M}$ for all ${\zeta\in P_n}$ and some constant $M>0$ depending on~$\varepsilon$ but not on~$w$. Since ${G_{\Omega_n}(\zeta,w)=0}$ for all ${\zeta\in P_n}$, we have $|\psi_n|\le M$ on~$P_n$, and it only remains to apply the argument used in the above proof of assertion~(A).

\medskip
\noindent\textsc{Proof of (C)}. Since $f_n(\Omega_n)\subset\UD$ for all ${n\in\Natural}$, the functions $f_n$ form a normal family in ${\Omega_0\setminus\{\zeta^*\}}$. Therefore, passing if necessary to a subsequence, we may suppose that $(f_n)$ converges locally uniformly in ${\Omega_0\setminus\{\zeta^*\}}$ to a holomorphic function ${f:\Omega_0\setminus\{\zeta^*\}\to\moverline\UD}$. Since $f$ is bounded in ${\Omega_0\setminus\{\zeta^*\}}$, it extends holomorphically to~$\zeta^*$.

To prove~(C) it is sufficient to show that~${f=f_0}$.
Clearly $f(z_0)=\lim_{n\to+\infty}{f_n(z_n)}=0$.
Moreover, $f'(z_0)\ge f_0'(z_0)$. Indeed, for each ${n\in\Natural}$, let $T_n$ be the automorphism of~$\UD$ such that ${T_n(f_0(z_n))=0}$ and ${T_n'(f_0(z_n))>0}$. Note that $T_n\circ f_0$ maps $\Omega_n$ conformally onto a subdomain of~$\UD$ in such a way that the boundary component~$\Gamma$ corresponds to~$\partial\UD$. Bearing in mind that ${(T_n\circ f_0)(z_n)=0}$ and ${(T_n\circ f_0)'(z_n)>0}$, by the extremal property of the conformal mappings onto circularly slit disks, see Remark~\ref{RM_extremal-property}, we have ${f_n'(z_n)\ge(T_n\circ f_0)'(z_n)}$.
Passing to the limit as ${n\to+\infty}$ and taking into account that $T_n\to\id_\UD$ yields the desired conclusion.
In particular,  this means that $f'(z_0)\neq0$. Hence $f$ is a conformal mapping of~$\Omega_0$ onto a subdomain of~$\UD$.

In order to show that $\Gamma$ corresponds under~$f$ to the outer boundary of~$f(\Omega_0)$, consider a $C^1$-smooth Jordan curve ${C\subset f(\Omega_0\setminus\{\zeta^*\})}$ that separates the outer boundary of $f(\Omega_0)$ from all other boundary components of $f\big(\Omega_0\setminus\{\zeta^*\}\big)$. Let $\mathcal D$ be the connected component of ${\C\setminus f^{-1}(C)}$ that contains~$\Gamma$. Then $\overline{\mathcal D}\cap\Omega_0\subset \Omega_n$ for all ${n\in\Natural}$ large enough and moreover, for all such $n$'s, $f_n(\mathcal D\cap\Omega_0)$ lies in the unbounded component of ${\Complex\setminus C_n}$, where $C_n:={f_n\big(f^{-1}(C)\big)}$, because ${f_n(\Gamma)=\partial\UD}$. Using this fact, we conclude that for any ${\zeta\in\mathcal D\cap\Omega_0}$,
\begin{align*}
\int\limits_{C}\frac{\di w}{w-f(\zeta)}~&=\int\limits_{f^{-1}(C)}\frac{f'(z)\di z}{f(z)-f(\zeta)}\\
&=\lim_{n\to+\infty}\int\limits_{f^{-1}(C)}\frac{f_n'(z)\di z}{f_n(z)-f_n(\zeta)}
=\lim_{n\to+\infty}\int\limits_{C_n}\frac{\di w}{w-f_n(\zeta)}~=~0.
\end{align*}
It follows that $f(\Gamma)$ lies in the unbounded connected component of $\C\setminus C$. By the very construction, the only boundary component of~$f(\Omega)$ possessing this property is its outer boundary.
Recalling that $f(z_0)=0$ and $f'(z_0)\ge f_0'(z_0)$, we see now that $f=f_0$ thanks to the extremal property stated in Remark~\ref{RM_extremal-property}.

\medskip
\noindent\textsc{Proof of (E)}. With (C) having been already proved, relation~\eqref{EQ_shrinking-dist1} is an immediate consequence of the following two facts:
\begin{itemize}
\item[(i)] if $K\subset f_0(\Omega_0\setminus\{\zeta^*\})$ is compact, then $K\subset f_n(\Omega_n)$ for all ${n\in\Natural}$ large enough;\vskip.5ex
\item[(ii)] for any $w\in\partial f(\Omega_0\setminus\{\zeta^*\})$ there exist a sequence $w_n\in\partial f_n(\Omega_n)$ converging to~$w$.
\end{itemize}
We omit the proof of assertions (i) and~(ii) because, up to a few light adjustments, it repeats the standard argument used in the proof of Carath\'eodory's Kernel Convergence Theorem, see e.g. \cite[Theorem~1.8 on p.\,14]{Pommerenke:BoundaryBehaviour}.

To prove relation~\eqref{EQ_shrinking-dist2}, consider the functions $u_n(z):=\omega_{\Omega_n}\big(g_n^{-1}(z),\partial\Omega_0\big)$ defined on the domains ${D_n:=g_n(\Omega_n)}$.
Applying Theorem~\ref{TH_harmonic-measure-via-Log-potential} and Remark~\ref{RM_several components}, we see that
$$
u_n=V_{\mu'_n}-V_{\nu'_n}\quad\text{for each ${n\in\Natural}$,}
$$
where $\mu'_n$ and $\nu'_n$ are positive Borel measures supported on the union of the (non-degenerate) boundary components of $D_n$ corresponding under~$g_n$ to the connected components of $P_0$ and to those of~$P_n$, respectively.
Moreover, $|\mu'_n|=|\nu'_n|$.

In fact, $|\mu'_n|=|\nu'_n|=c_n$ for all~$n\in\Natural$. Indeed, thanks to the conformal invariance of the periods, see Remark~\ref{RM_harm-meas-conformal inv}, this equality follows from relation~\eqref{EQ_several components:charges} applied twice: for the measures $\mu_n$,~$\nu_n$ and for the measures  $\mu'_n$,~$\nu'_n$.

Therefore, on the one hand, by \eqref{EQ_c_n-to-zero}, we have $|\mu'_n|=|\nu'_n|\to0$ as ${n \to +\infty}$. On the other hand, by Claim~1, $u_n(0)=1-\omega_{\Omega_n}(\zeta_n,\Upsilon_n)\to 1$ as ${n \to +\infty}$. It follows that $${\dist\big(0,\supp(\mu'_n+\nu'_n)\big)\to0}\quad\nlim,$$ which is equivalent to~\eqref{EQ_shrinking-dist2}.

\medskip
\noindent\textsc{Proof of (D)}. Recall that by the hypothesis, there exists a compact set ${K\subset\Omega_0\setminus\{\zeta^*\}}$ such that $\zeta_n:=g_n^{-1}(0)\in K$ for all ${n\in\Natural}$. Moreover, the functions $g_n$ are all univalent in their domains and form a normal family in~$\Omega_0\setminus\{\zeta^*\}$. Therefore, it is sufficient to show that $g_n'(\zeta_n)\to0$ as  ${n \to +\infty}$. According to Koebe's $1/4$-Theorem, see e.g. \cite[Theorem~7.8 on p.\,64]{ConwayII},
$$
|g_n'(\zeta_n)|\le\frac{4\,\dist(0,\partial D_n)}{\dist(\zeta_n,\partial\Omega_n)}\quad\text{for all~$n\in\Natural$.}
$$
Thus, (D) follows from~\eqref{EQ_shrinking-dist2}.
\end{proof}

\section{Proof of Theorem~\ref{TH_doubly}}\label{S_doubly}
Let $\Omega\subset\C$ be a doubly connected domain with at least one non-degenerate boundary component. Fix $z\in\Omega$. As we mentioned in the Introduction, the supremum in~\eqref{EQ_main-definition} is achieved for at least one injective holomorphic function $f_*:\Omega\to\UD$ and such a function is said to be \textsl{extremal} in the squeezing function problem. Let $K\subset\UD$ be the bounded connected component of ${\C\setminus f_*(\Omega)}$. Denote by $\varphi$ the conformal mapping of the simply connected domain $D:=f_*(\Omega)\cup K$ onto~$\UD$ normalized by ${\varphi(0)=0}$ and ${\varphi'(0)>0}$. On the one hand, since $f_*$ is extremal, we have
$$
 \dist\big(0,\partial f_*(\Omega)\big)\ge \dist\big(0,\partial\varphi(f_*(\Omega))\big)=\dist\big(0,\varphi(K)\big).
$$
On the other hand, if $\zeta_*\in\UD$ is the point of~$\varphi(K)$ closest to the origin, then by the Schwarz Lemma applied for~$\varphi^{-1}$,
$$
 \dist\big(0,\partial f_*(\Omega)\big)\le|\varphi^{-1}(\zeta_*)|\le|\zeta_*|=\dist\big(0,\varphi(K)\big).
$$
This means that equality takes place in all the above inequalities. Hence, $\varphi^{-1}=\id_\UD$, i.e. $D=\UD$.
Consequently, the outer boundary of $f_*(\Omega)$ coincides with~$\partial\UD$ and
\begin{equation}\label{EQ_S=dist-K}
S_\Omega(z)=\dist(0,\partial K).
\end{equation}
In particular, the statement of Theorem~\ref{TH_doubly} is now obvious for doubly connected domains~$\Omega$ with one degenerate boundary component. Therefore, without loss of generality we may suppose that $\Omega=\A_r$ for some $r\in(0,1)$ and that ${f_*(\partial\UD)=\partial\UD}$. Otherwise we would replace $f_*$ by $f_*\circ g^{-1}$ and $z$ by~$g(z)$, where $g$ is a conformal mapping of $\Omega$ onto~$\A_r$ taking $f_*^{-1}(\partial\UD)$ to~$\partial\UD$.

Then by Corollary~\ref{CR_doubly-conn}\,(A) applied for $f:=f_*$,
$$
\log\frac{1}{|z|}=\log\frac{1}{|f_*^{-1}(0)|}=V_{\mu_*}(0)-V_{\nu_*}(0),
$$
where $\mu_*$ and $\nu_*$ are two probability measures with ${\supp\mu_*=\partial K}$ and ${\supp\nu_*=\partial\UD}$. Clearly, $V_{\nu_*}(0)=0$. Hence, taking into account~\eqref{EQ_S=dist-K}, we have
$$
\log\frac{1}{|z|}=\int\limits_{\partial K}\log\frac{1}{|w|}\di\mu_*(w)\le\log\frac{1}{S_\Omega(z)}.
$$
The equality can occur only if $\partial K$ is contained on the circle of radius~$S_\Omega(z)$ centered at the origin, because $\supp\mu_*$ \textit{coincides} with~$\partial K$.

Taking into account that by Corollary~\ref{CR_doubly-conn}\,(B), ${S_\Omega(z)\ge |z|}$, we conclude that in fact, ${S_\Omega(z)=|z|}$ and that $f_*(\Omega)$ is a circularly slit disk.

To obtain formula~\eqref{EQ_Squeezing-for-the-annulus}, it remains to recall that in case $f_*^{-1}(\partial\UD)$ is $\partial\UD(0,r)$ rather than $\partial\UD$, we have to replace $z$ by $g(z)$, where $g$ is a conformal automorphism of~$\A_r$ permuting the boundary components. The proof is now complete.   \qed

\section{Conformal mapping of an annulus onto a circularly slit disk}\label{S_fx}
Fix $r \in (0,1)$ and consider the annulus $\A_r:=\{z \in \C \, : \, r<|z|<1\}$.
According to Remark~\ref{RM_canonica-map-existence}, for any fixed $x \in (r,1)$ there exist  a unique conformal map~$f_x$ from $\A_r$ into~$\D$ with ${f_x(x)=0}$, ${f_x(\partial \D)=\partial \D}$ and such that $\Gamma_x:=\UD\setminus f_x(\A_r)$ is a circular arc  centred at the origin, symmetric w.r.t.~the real axis, and intersecting the interval $(-1,0)$; see Figure~\ref{FG_fx}.  Note that $f_x$ and $\Gamma_x$  depend also on $r \in (0,1)$, but since $r \in (0,1)$ will be fixed throughout, we suppress the dependence of~$f_x$ on $r$ in our notation.\\
\begin{figure}[h]
\centerline{
\includegraphics[width=6cm]{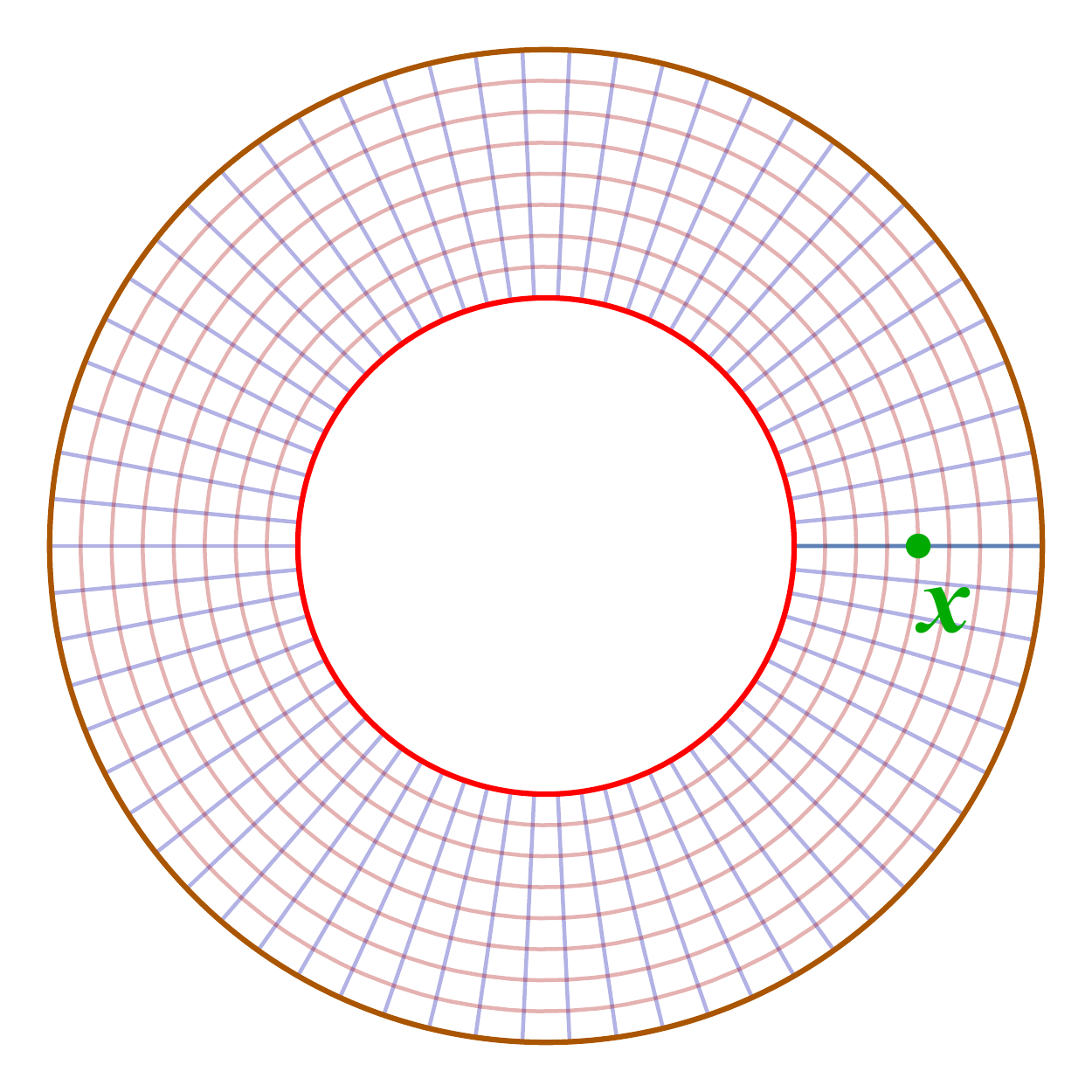}
\hskip-5mm
\raise5cm\hbox{\begin{tikzpicture}
\draw [thick] [-{Stealth[scale=1.25,
          length=6,
          width=4]}] (-1.5,0) .. controls (-0.5,0.5) and (0.5,0.5) .. (1.5,0);
 \draw (0,0.75) node[scale=1.1] {$f_x$};
\end{tikzpicture}}
\hskip-5mm
\includegraphics[width=6cm]{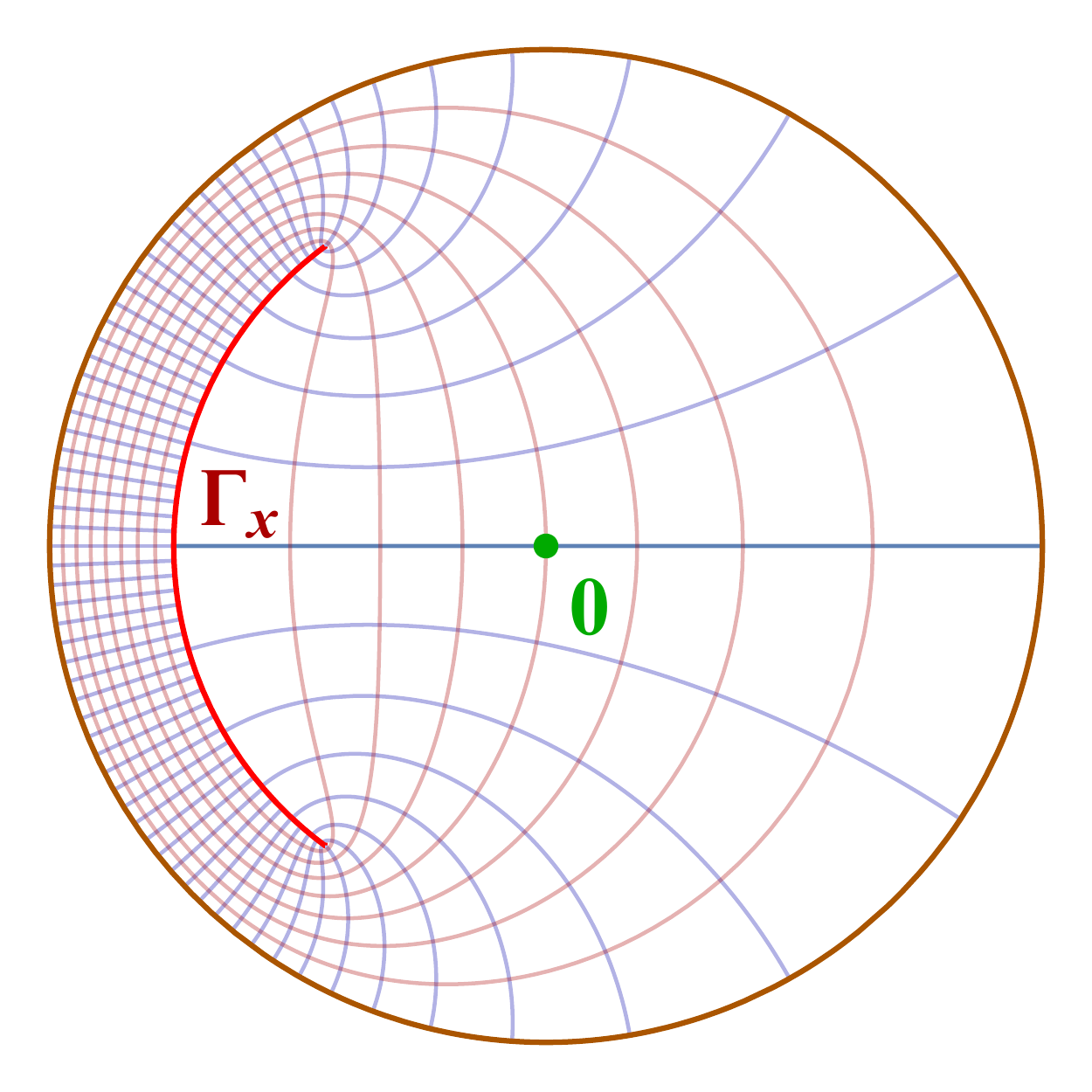}
}
\caption{The map $f_x$ for $r=1/2$ and $x=3/4$}\label{FG_fx}
\end{figure}

In the following lemma we collect some auxiliary statements, which will be used in Section~\ref{S_tripply}  to prove Theorem~\ref{TH_tripply}.

\begin{lemma}[\sc Properties of the conformal map $f_x$] \label{lem:confprop}
Let $r \in (0,1)$ be fixed.\samepage
\begin{itemize}
\item[(A)] For each $x \in (r,1)$ the map $f_x$ has  a holomorphic extension to the annulus $$\A_{r,x}:=\{z \in \C \, : \, r^2/x<|z|<1/x\}$$ and
    $(x,z) \mapsto f_x(z)$ is a $C^{\infty}$--\,function on $\{(z,x)\colon z \in \A_{r,x},\, x \in (r,1) \}$.
\smallskip
\item[(B)] For each $x \in (r,1)$,
      \begin{align} \label{eq:symm}  f_{\alpha}(x) &= -f_x(\alpha)\quad \text{ for all~}~\alpha \in (r,1)  \\
        \intertext{and}
       \label{eq:derivative}  \frac{\di}{\di\alpha} \bigg|_{\alpha=x} f_{\alpha}(x) &= -f_x'(x) < -\frac{1}{1-x^2}.
         \end{align}
\smallskip
\item[(C)] For each $x \in (r,1)$ the circular arc $\Gamma_x$ intersects the real line at the point $-x$.
\smallskip
\item[(D)] If $\Gamma^{+}_x$ denotes the endpoint of the arc $\Gamma_x$ in the upper half-plane, then $x \mapsto \Gamma^{+}_x$ is~a~$C^{\infty}$--\,function in $(r,1)$.
\end{itemize}
\end{lemma}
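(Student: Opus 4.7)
The plan is to write $f_x$ in closed form via the Schottky--Klein prime function of the annulus,
$$
P(z):=(1-z)\prod_{k\ge1}\bigl(1-r^{2k}z\bigr)\bigl(1-r^{2k}/z\bigr),
$$
which is holomorphic on $\C\setminus\{0\}$ and satisfies the reflection identities $P(1/z)=-P(z)/z$ and $P(r^2z)=-P(z)/z$. The first step is to show that, with a suitable real constant $c$ of modulus $x$,
$$
f_x(z)\;=\;c\cdot\frac{P(z/x)}{P(zx)}
$$
realizes the canonical map. A zero/pole count confirms that this quotient has a single simple zero at $z=x$ and no poles in $\A_r$ (the zeros of $P$ form $\{r^{2k}:k\in\Z\}$); the identity $P(1/z)=-P(z)/z$ forces $|f_x|\equiv 1$ on $|z|=1$ provided $|c|=x$; and the reality of the coefficients of $P$ implies $f_x(\bar z)=\overline{f_x(z)}$, placing the slit symmetrically on $\{|w|=x\}$, with the sign of $c$ adjusted so that $-x\in\Gamma_x$ and $f_x'(x)>0$. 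Uniqueness of the canonical map then identifies the quotient with $f_x$. Part~(A) now reads off the formula: the poles of $f_x$ are at $z=r^{2k}/x$, $k\in\Z$, the ones closest to $\A_r$ being $1/x$ and $r^2/x$, so $f_x$ is holomorphic precisely on $\A_{r,x}$, and joint real--analyticity in $(x,z)$ is manifest from the product representation.

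For (B), applying $P(1/w)=-P(w)/w$ with $w=\alpha/x$ and cancelling $P(x\alpha)=P(\alpha x)$ reduces the symmetry $f_\alpha(x)=-f_x(\alpha)$ to an identity. Differentiating in $\alpha$ at $\alpha=x$ then yields $\frac{\di}{\di\alpha}\big|_{\alpha=x}f_\alpha(x)=-f_x'(x)$. The strict inequality $f_x'(x)>1/(1-x^2)$ comes from Remark~\ref{RM_extremal-property}: the M\"obius map $g(z):=(z-x)/(1-xz)$ restricted to $\A_r$ is a legitimate competitor in the extremum problem (injective, $g(\A_r)\subset\D$, $g(x)=0$, $g'(x)=1/(1-x^2)>0$, and $g(\partial\D)=\partial\D$ is the outer boundary of $g(\A_r)$), but $g(\A_r)$ is $\D$ with a round closed disk removed rather than a circularly slit disk, so the uniqueness clause of the extremum yields $f_x'(x)>g'(x)=1/(1-x^2)$.

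Part~(C) is almost immediate: by Corollary~\ref{CR_doubly-conn}(B) applied to $f=f_x$ we have $\Gamma_x\subset\{|w|=x\}$, and combined with $f_x(\bar z)=\overline{f_x(z)}$, $\Gamma_x$ is a closed connected symmetric arc on this circle meeting $(-1,0)$; hence it meets $\R$ only at $-x$ (the arc is neither the full circle, for then $f_x(\A_r)$ would be disconnected, nor a single point, as $\A_r$ is not conformally equivalent to a punctured disk). For~(D), the endpoints $\Gamma_x^{\pm}$ are the two critical values of $f_x$ along $|z|=r$: the tangential derivative $\frac{\di}{\di t}f_x(re^{it})=ire^{it}f_x'(re^{it})$ must vanish at a preimage $z^*$ of $\Gamma_x^{+}$ because the trace $t\mapsto f_x(re^{it})$ reverses direction at the endpoint, whence $f_x'(z^*)=0$, and local $2$-to-$1$ fold behaviour forces $f_x''(z^*)\neq 0$. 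Since $(x,z)\mapsto f_x'(z)$ is jointly $C^\infty$ by~(A), the implicit function theorem produces a $C^\infty$ branch $x\mapsto z^*_x$ near any $x_0\in(r,1)$; continuity, together with the fact that for each $x$ there are exactly two critical points of $f_x$ on $|z|=r$ (the preimages of $\Gamma_x^{\pm}$), keeps $z^*_x$ on $|z|=r$, so $\Gamma_x^{+}=f_x(z^*_x)$ is $C^\infty$ on $(r,1)$.

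The main obstacle is setting up the explicit formula and pinning down the holomorphy domain precisely as $\A_{r,x}$: a naive Schwarz reflection through $|z|=1$ and $|z|=r$ only produces a \emph{meromorphic} extension to $\{r^2<|z|<1/r\}$ with poles at $z=1/x$ and $z=r^2/x$, and reading off $\A_{r,x}$ really requires the global product formula. The strict inequality in~(B) is also delicate in that it genuinely needs the uniqueness clause of Remark~\ref{RM_extremal-property}; without uniqueness one obtains only the non--strict bound $f_x'(x)\ge 1/(1-x^2)$.
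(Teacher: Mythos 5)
Your proof follows essentially the same route as the paper: both rest on the explicit Schottky--Klein prime function representation of $f_x$ (your one--variable $P$ is the paper's $\vomega(z,a)$ up to the substitution $w=z/a$ and a constant), and parts (A), (C), (D) are argued in the same way (poles at $r^{2k}/x$, Corollary~\ref{CR_doubly-conn}\,(B) plus symmetry, and the implicit function theorem at the critical point of $f_x$ on $|z|=r$). The only divergence is the strict inequality $f_x'(x)>1/(1-x^2)$, which you derive from the uniqueness clause of the extremal property in Remark~\ref{RM_extremal-property} via the M\"obius competitor $(z-x)/(1-xz)$, whereas the paper computes $f_x'(x)$ as an explicit infinite product each of whose factors exceeds $1$ --- the paper explicitly notes your argument as a valid alternative.
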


\newcommand{\vomega}{\hskip.05em\text{\textomega}\hskip.05em}
\begin{remark}[\sc The map $f_x$ via the Schottky\,--\,Klein prime function of $\A_r$] \label{rem:prime}\mbox{}\\
The proof of Lemma~\ref{lem:confprop} can conveniently be based on the Schottky\,--\,Klein prime function $\vomega(z,a)$ of the annulus $\A_r$, which is  defined for all $z,a\in\C^*:=\C \setminus \{0\}$ by

\begin{equation}\label{eq:primedef}
    \vomega(z,a):=(z-a) \prod \limits_{n=1}^{+\infty} \frac{(r^{2n} z-a)(r^{2n} a-z)}{(r^{2n} z-z)(r^{2n} a-a)}= \,
    (z-a) \prod \limits_{n=1}^{+\infty} \frac{(1-r^{2n}z/a)(1-r^{2n}a/z)}{(1-r^{2n})^2},
    \end{equation}
    see \cite[formula (14.57)]{Crowdy}.
    It is known \cite[Section~5.6]{Crowdy} that for each $a \in \A_r$, the function
\begin{equation} \label{eq:primeconformal}
   g_a(z):=\frac{1}{|a|}\frac{\vomega(z,a)}{\vomega(z,1/\overline{a})}
  \end{equation}
 maps $\A_r$ conformally onto a circularly slit disk, with $g_a(a)=0$ and $g_a(\partial \D)=\partial \D$.

   The prime function $\vomega : \C^* \times \C^* \to \Complex$ is holomorphic in both variables and satisfies the following functional identities, see \cite[Section~5.3]{Crowdy}:
\begin{align}
  \vomega(a,z) &= -\vomega(z,a)\, , \label{eq:prime1}\\[.5ex]
  \vomega(\overline{z},\overline{a})&=\overline{\vomega(z,a)}\, ,  \label{eq:prime2} \\[.5ex]
  \vomega(1/z,1/a)&=-\frac{\vomega(z,a)}{za}\,, \label{eq:prime3} \\[.5ex]
  \vomega(r^2 z,a)&= -\frac{a\vomega(z,a)}{z} \, , \label{eq:prime4}
\end{align}
In particular, if $x\in(r,1)$, then \eqref{eq:prime2} yields that $g_x(\overline{z})=\overline{g_x(z)}$. Hence $g_x(\A_r)$ is symmetric w.r.t.~the real axis. A look at \eqref{eq:primedef} reveals that $g_x(r)=x>0$, and it follows that $g_x$ maps $\A_r$ onto $\D$ minus a circular arc symmetric w.r.t.~$\R$ and intersecting the interval~$(0,1)$. This clearly implies $f_x=-g_x$, i.e.
\begin{equation} \label{eq:confprime}
  f_x(z)=-\frac{1}{x} \frac{\vomega(z,x)}{\vomega(z,1/x)} \,
\end{equation}
for any $x \in (0,1)$.
\end{remark}

\begin{proof}[Proof of Lemma~\ref{lem:confprop}]
\textsc{To prove~(A),} notice that according to~\eqref{eq:confprime}, for each $x\in(r,1)$,  $f_x$~extends to a meromorphic function in~$\C^*$. By~\eqref{eq:primedef}, the zeroes of $\vomega(\cdot,1/x)$ are exactly the points ${z_k:=r^{2k}/x}$ with ${k\in\mathbb{Z}}$. Hence, the extension of $f_x$ is holomorphic in~$\A_{r,x}$. Moreover, ${(x,z)\mapsto f_x(z)}$ is of class $C^\infty$ in $\{(x,z)\colon z\in\A_{r,x},\,x\in(r,1)\}$ because the function~$\vomega$ is holomorphic in~${\C^*\times\C^*}$.
\medskip

\newcommand{\eqcomm}[1]{\xlongequal{\raise.5ex\hbox{$\scriptstyle #1$}}}
\noindent\textsc{Proof of~(B)}.  For all $x,\alpha \in (r,1)$, we have
$$ f_{\alpha}(x)=-\frac{1}{\alpha} \frac{\vomega(x,\alpha)}{\vomega(x,1/\alpha)} \eqcomm{(\ref{eq:prime1})} -\frac{1}{\alpha} \frac{\vomega(\alpha,x)}{\vomega(1/\alpha,x)}  \eqcomm{(\ref{eq:prime3})} \frac{1}{x} \frac{\vomega(\alpha,x)}{\vomega(\alpha,1/x)}=-f_x(\alpha)\, .$$
From this and the fact that $\alpha \mapsto f_{\alpha}(x)=-f_x(\alpha)$ is $C^{\infty}$  in $(r,1)$ it follows immediately that
$$ \frac{\di}{\di\alpha} \bigg|_{\alpha=x} f_{\alpha}(x)=-f_x'(x) \, .$$
Using~\eqref{eq:primedef}  and~\eqref{eq:confprime}, we obtain
$$
f'_x(x)=\frac1{1-x^2}\prod_{n=1}^{+\infty}\frac{(1-r^{2n})^2}{(1-r^{2n}x^2)(1-r^{2n}x^{-2})}>\frac1{1-x^2}.
$$
It is worth mentioning that this inequality can be alternatively deduced from the extremal property of conformal mappings onto circularly slit disks, see Remark~\ref{RM_extremal-property}, by comparing~$f_x$ with a suitable automorphism of~$\UD$ restricted to~$\A_r$.

\REM{
Let
$$ L_x(z):=\frac{z+x}{1+x z}\,,\quad z\in\UD.$$
Clearly, $D:=L_x^{-1}(\A_r)$ is a doubly connected domain in~$\UD$ whose outer boundary is $\partial \D$. Moreover,  ${\varphi:=f_x \circ L_x:D\to\UD\setminus\Gamma_x}$ satisfies the hypothesis of Proposition~\ref{PR_Tsuji}. Therefore, ${f_x'(x)L_x'(0)>1}$ or equivalently,
 $$f_x'(x)>\frac{1}{1-x^2},$$ where the strict inequality holds because $D$ is not a circularly slit disk. }

\medskip
\noindent\textsc{Proof of~(C)}. See Corollary~\ref{CR_doubly-conn}\,(B).

\medskip
\noindent\hbox{\textsc{Proof of~(D)}.} Fix~$x\in(r,1)$.  There exists a unique point $r_x \in \partial \D(0,r)$ such that ${f_x(r_x)=\Gamma^+_x}$. Recall that $f_x(\overline z)=\overline{f_x(z)}$ for all ${z\in\A_{x,r}}$. Since $f_x$ is injective and ${f'_x(x)>0}$, it follows that $\Im f_x(z)>0$ for some ${z\in\A_{r,x}}$ if and only if~${\Im z>0}$. In particular,  ${\Im r_x>0}$. Note that the mapping properties of~$f_x$ show that $f_x'(r_x)=0$, but $f_x''(r_x)\neq0$. Moreover, with the help of the Schwarz Reflection Principle we see that $f_x$ is locally injective in $\A_{r,x}\setminus\{r_x,\overline{r_x\!}\,\}$.

Therefore, $r_x$ is the unique solution to the equation ${f_x'(z)=0}$ in ${\{z\in\A_{r,x}\colon \Im z>0\}}$.
Since in view of~(A) the map $(x,z) \mapsto f_x'(z)$ is of class $C^{\infty}$,
the Implicit Function Theorem guarantees that this solution~$r_x$ is also of class $C^{\infty}$
as a function of~$x$.
\end{proof}

\section{Proof of Theorem~\ref{TH_tripply}}\label{S_tripply}

The proof of Theorem~\ref{TH_tripply} is divided into two steps.
We will first show that the analogue of Conjecture~\ref{CNJ} fails for certain once-punctured circularly slit disks of the form
$$ \Omega=\D \setminus (\Gamma \cup \{\zeta^*\}) \, ,  $$
where $\Gamma$ is a non-degenerate circular arc centered at the origin and $\zeta^*$ is a point in $\D \setminus \Gamma$. In other words, formula \eqref{EQ_conjectured} does not hold for such choice of $\Omega$.
 In Section~\ref{SS_non-degenerate} we will see that if the degenerate boundary component~$\{\zeta^*\}$ is replaced by a compact set~$\Upsilon$ consisting of a finite number of non-degenerate circular arcs, then it is still possible to show that formula~\eqref{EQ_conjectured} fails provided that $\Upsilon$ lies within a sufficiently small neighbourhood of~$\zeta^*$.

\subsection{Degenerate case}\label{SS_degenerate}
Fix ${r\in(0,1)}$ and ${x_0\in(\sqrt{r},1)\subset\A_r}$. Using the notation introduced in Section~\ref{S_fx}, for $x \in (r,x_0]$ we let
$$
 \phi_x:=T_x \circ f_x \circ f_{x_0}^{-1}\quad \text{and}\quad {\Gamma(x):=T_x(\Gamma_x)}, \quad \text{ where~}~T_x(z):=\frac{z-f_x(x_0)}{1-f_{x}(x_0) z}.
$$
Then $\phi_x$ maps the circularly slit disk $\Omega_0:=f_{x_0}(\A_r)=\D \setminus \Gamma_{x_0}$ conformally onto
${\D \setminus \Gamma(x)}$, with ${\phi_x(0)=0}$ and ${\phi_x(\partial \D)=\partial \D}$. The set
$\Gamma(x)$ is a closed subarc of the circle $T_x\big(\partial \D(0,x)\big)$ symmetric w.r.t. the real line.

We will see that there exists $x^*\in(r,x_0)$ and~${\zeta^*\in(-x_0,0)}$ such that
\begin{equation}\label{EQ_main-for-degenerate-case}
\dist\big(0,\partial(\Omega_0\setminus\{\zeta^*\})\big) <\dist\big(0,\partial\phi_{x^*}(\Omega_0\setminus\{\zeta^*\})\big).
\end{equation}
Using this fact, it is easy to show that the formula \eqref{EQ_conjectured} of Conjecture~\ref{CNJ} does not hold for~$\Omega:=\Omega_0\setminus\{\zeta^*\}$ and ${z:=0}$.
The proof of~\eqref{EQ_main-for-degenerate-case} is based on the following lemma.
\begin{lemma}\label{LM_phi_x}
Let $0<r<x_0<1$ be fixed. Then
for any $\,\varepsilon\in(0,x_0)$ there exists $x^* \in (r,x_0)$ and $\delta \in (0,\varepsilon)$ such that\samepage
  \begin{itemize}
  \item[(i)] $\phi_{x^*}(\xi)<\xi$ for all $\xi \in (-x_0,-x_0+\delta)$;\vskip.5ex
  \item[(ii)] $\dist(0,\Gamma(x^*))>x_0-\delta$.
  \end{itemize}
\end{lemma}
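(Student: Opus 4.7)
The strategy is to exploit that $\phi_{x_0}=\id_{\Omega_0}$ (since $T_{x_0}$ is the identity) and to analyze the first-order behaviour of $\phi_x$ and of $\Gamma(x)$ for $x$ slightly below $x_0$. The key scalar quantity will be
\[
\psi(x) \;:=\; T_x(-x),
\]
which by Lemma~\ref{lem:confprop}(C) is the real-axis intersection of $\Gamma(x)=T_x(\Gamma_x)$; I shall show shortly that it also coincides with the one-sided boundary value of $\phi_{x^*}$ at $-x_0$ from the right.

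Setting $a_x:=f_x(x_0)=-f_{x_0}(x)$ by virtue of Lemma~\ref{lem:confprop}(B), I obtain
\[
\psi(x) \;=\; \frac{-x+f_{x_0}(x)}{1-x\,f_{x_0}(x)},\qquad \psi(x_0)=-x_0,
\]
and a direct application of the quotient rule, together with $f_{x_0}(x_0)=0$, yields
\[
\psi'(x_0) \;=\; -1+(1-x_0^2)\,f_{x_0}'(x_0) \;>\;0,
\]
where the strict inequality uses precisely the estimate $f_{x_0}'(x_0)>1/(1-x_0^2)$ of Lemma~\ref{lem:confprop}(B). Consequently, for every $x^*\in(r,x_0)$ close enough to $x_0$, the quantity $\eta:=-x_0-\psi(x^*)$ is strictly positive.

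Next I check that $\phi_{x^*}$ extends continuously to $-x_0$ from the right along the real axis, with $\phi_{x^*}(-x_0^{+})=\psi(x^*)$. Indeed, $f_{x_0}$ sends the inner boundary $\{|z|=r\}$ onto $\Gamma_{x_0}$, and real-symmetry together with $f_{x_0}'(x_0)>0$ forces $f_{x_0}^{-1}(-x_0^{+})=r$; analogous reasoning gives $f_{x^*}(r)=-x^*$, hence $T_{x^*}(-x^*)=\psi(x^*)$ by definition. Continuity then supplies a $\delta_1>0$ such that
\[
\phi_{x^*}(\xi) \;<\; -x_0-\eta/2 \;<\; -x_0 \;<\; \xi \qquad\text{for all } \xi\in(-x_0,-x_0+\delta_1),
\]
which is exactly (i). For (ii), parts~(A) and~(D) of Lemma~\ref{lem:confprop} imply that $\Gamma(x)$ varies continuously (in fact smoothly) in the Hausdorff metric, so $\dist(0,\Gamma(x))\to x_0$ as $x\to x_0$. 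Given $\varepsilon\in(0,x_0)$, I would therefore pick $x^*\in(r,x_0)$ close enough to $x_0$ that $d(x^*):=x_0-\dist(0,\Gamma(x^*))$ satisfies $d(x^*)<\min(\delta_1,\varepsilon)$, and take any $\delta\in\bigl(\max(0,d(x^*)),\,\min(\delta_1,\varepsilon)\bigr)$.

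The main obstacle is to ensure the compatibility $d(x^*)<\delta_1(x^*)$ despite the implicit dependence of $\delta_1$ on~$x^*$. I would handle this by a joint Taylor expansion of $\phi_{x^*}(\xi)-\xi$ at $(\xi,x^*)=(-x_0,x_0)$: since $\phi_{x_0}=\id$ and $\phi_{x^*}(-x_0)=\psi(x^*)$, the first-order part equals $\psi'(x_0)(x^*-x_0)$, so
\[
\phi_{x^*}(\xi)-\xi \;=\; -\psi'(x_0)(x_0-x^*) + O\bigl((\xi+x_0)^2 + (x_0-x^*)^2\bigr).
\]
This yields $\delta_1(x^*)\gtrsim\sqrt{x_0-x^*}$ for $x^*$ near $x_0$, whereas the smooth dependence of $\Gamma(x)$ gives $d(x^*)=O(x_0-x^*)$. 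Hence $d(x^*)/\delta_1(x^*)\to 0$, and for $x^*$ sufficiently close to~$x_0$ both conditions can be met simultaneously with $\delta<\varepsilon$, completing the proof.
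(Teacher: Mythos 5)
Your proof is correct and follows essentially the same route as the paper's: the same key quantity $q(x)=T_x(-x)$ with $q'(x_0)=-1+(1-x_0^2)f_{x_0}'(x_0)>0$ (via Lemma~\ref{lem:confprop}(B)), the same Lipschitz bound $x_0-\dist(0,\Gamma(x))=O(x_0-x)$, and the same order-of-magnitude comparison showing that the interval on which $\phi_{x^*}(\xi)<\xi$ is asymptotically much longer than the loss in $\dist(0,\Gamma(x^*))$. The only difference is technical: where you get $\delta_1(x^*)\gtrsim\sqrt{x_0-x^*}$ from a joint Taylor expansion of $\phi_x(\xi)-\xi$ at $(-x_0,x_0)$ (legitimate, since Lemma~\ref{lem:confprop}(A) gives joint smoothness across $\xi=-x_0$), the paper derives $\delta(x)/(x_0-x)\to+\infty$ from the uniform convergence $\phi_x'\to1$ on $[-x_0,0]$; both suffice.
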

We postpone the proof to the end of this section.
Choose any $x_0\in(\sqrt r,1)$ and, taking Lemma~\ref{LM_phi_x} for granted, apply it with ${\varepsilon:=x_0-r/x_0}$. By assertion~(ii), there exists ${\zeta^*\in(-x_0,-x_0+\delta)}$ such that $|\zeta^*|<\dist(0,\Gamma(x^*))$.
In combination with~(i), this leads to
\begin{equation}\label{EQ->|zeta-star|}
 \dist\big(0,\partial\phi_{x^*}(\Omega_0\setminus\{\zeta^*\})\big)=
 \min\big\{|\phi_{x^*}(\zeta^*)|,\dist(0,\Gamma(x^*))\big\}>|\zeta^*| .
\end{equation}
Then~\eqref{EQ_main-for-degenerate-case} holds because, trivially, $\dist\big(0,\partial(\Omega_0\setminus\{\zeta^*\})\big)<|\zeta^*|$.
\medskip

Now, if  formula \eqref{EQ_conjectured} in  Conjecture~\ref{CNJ}  would hold for $\Omega:=\Omega_0\setminus\{\zeta^*\}$ and ${z:=0}$, then
$$
\dist\big(0,\partial\phi_{x^*}(\Omega)\big)\le \dist\big(0,\partial f_{0,0}(\Omega)\big)\quad\text{or} \quad
\dist\big(0,\partial\phi_{x^*}(\Omega)\big)\le \dist\big(0,\partial f_{0,1}(\Omega)\big),
$$
with $f_{0,0}=\id_\Omega$ and $f_{0,1}=f_{r/x_0}\circ\big(z\mapsto r/z\big)\circ f_{x_0}^{-1}$. However, the first inequality is the opposite of~\eqref{EQ_main-for-degenerate-case}, while the latter one cannot hold because of the choice of~$x_0$ and $\varepsilon>0$; indeed,
\begin{equation}\label{EQ-<|zeta-star|}
\dist\big(0,\partial f_{0,1}(\Omega)\big)\le \dist\big(0,\partial f_{r/x_0}(\A_r)\big)=\frac{r}{x_0}=x_0-\varepsilon\le x_0-\delta<|\zeta^*|.
\end{equation}

To facilitate the proof of Lemma~\ref{LM_phi_x}, we will first establish the following statement.
\begin{lemma}\label{LM_dist-Lip}
The map $x \mapsto \dist\big(0,\Gamma(x)\big)$ is locally Lipschitz on $(r,x_0]$.
\end{lemma}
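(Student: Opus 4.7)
The plan is to exhibit $\dist\bigl(0,\Gamma(x)\bigr)$ as the pointwise minimum over a compact parameter set of a jointly smooth function of $(x,t)$, and then invoke the elementary observation that such a minimum is locally Lipschitz in the other variable. Since both $T_x$ and the arc $\Gamma_x$ already depend smoothly on~$x$ by Lemma~\ref{lem:confprop}, all of the substantive work has essentially been done.

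First I would produce a jointly $C^\infty$ parametrisation of $\Gamma(x)$. By Lemma~\ref{lem:confprop}(C), for each $x\in(r,1)$ the arc $\Gamma_x$ lies on the circle $\{|z|=x\}$, is symmetric with respect to $\R$, and passes through~$-x$. Combining this with Lemma~\ref{lem:confprop}(D), one obtains a $C^\infty$ function $\theta:(r,1)\to(0,\pi)$ with $\Gamma_x^+ = x\,e^{i\theta(x)}$, and then
$$
\gamma_0(x,t)\,:=\,x\exp\!\bigl(i\bigl(\pi + t(\pi-\theta(x))\bigr)\bigr),\qquad (x,t)\in(r,1)\times[-1,1],
$$
is a jointly $C^\infty$ parametrisation of $\Gamma_x$ by $t\in[-1,1]$. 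Since by Lemma~\ref{lem:confprop}(A) the map $x\mapsto f_x(x_0)$ is $C^\infty$, the Moebius transformation $T_x$ depends $C^\infty$-smoothly on~$x$, and hence
$$
\gamma(x,t)\,:=\,T_x\!\bigl(\gamma_0(x,t)\bigr)
$$
is jointly $C^\infty$ on $(r,1)\times[-1,1]$ and parametrises $\Gamma(x)$. Moreover $0\notin\Gamma(x)$ for any $x\in(r,x_0]$: since $\phi_x(0)=0$, we have $0\in\phi_x(\Omega_0)=\D\setminus\Gamma(x)$. Consequently $F(x,t):=|\gamma(x,t)|$ is $C^\infty$ and strictly positive on $(r,x_0]\times[-1,1]$.

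Finally, I would fix any compact subinterval $I\subset (r,x_0]$, bound $|\partial_x F|$ by some constant~$L$ on the compact set $I\times[-1,1]$, and apply the standard Lipschitz estimate for minima: given $x_1,x_2\in I$, choose $t_2\in[-1,1]$ realising $F(x_2,t_2)=\dist\bigl(0,\Gamma(x_2)\bigr)$ and use
$$
\dist\bigl(0,\Gamma(x_1)\bigr)\,\le\,F(x_1,t_2)\,\le\,F(x_2,t_2)+L|x_1-x_2|\,,
$$
together with the symmetric inequality obtained by swapping the roles of $x_1$ and~$x_2$. There is no genuine obstacle in this argument; the only subtle points are ensuring the joint $C^\infty$-regularity of the parametrisation (which requires both (A) and (D) of Lemma~\ref{lem:confprop}) and avoiding the non-smooth point $0$ of the function $z\mapsto|z|$, which is guaranteed by $\phi_x(0)=0$.
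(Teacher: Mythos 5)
Your proof is correct, and it takes a genuinely different route at the one non-routine point of the lemma. The paper first observes that the circle carrying $\Gamma(x)$ is centred at a point of $(-1,0)$ (because $f_x(x_0)>0$), so that $w\mapsto|w|$ is monotone along the arc and the minimum is attained exactly at the endpoints; hence $\dist\bigl(0,\Gamma(x)\bigr)=|T_x(\Gamma_x^+)|$, and the lemma reduces to the $C^\infty$ dependence of the single point $T_x(\Gamma_x^+)$ on $x$, which follows from parts (A) and (D) of Lemma~\ref{lem:confprop}. You instead bypass identifying the minimiser: you build a jointly $C^\infty$ parametrisation $\gamma(x,t)$ of the whole arc (using (C) and (D) to write $\Gamma_x^+=x e^{i\theta(x)}$ with $\theta$ smooth, and (A) for the smooth dependence of $T_x$) and then apply the standard fact that a pointwise minimum over a compact parameter set of functions with uniformly bounded $x$-derivative is Lipschitz in $x$. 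Both arguments rest on the same ingredients from Lemma~\ref{lem:confprop}; the paper's version is shorter and gives the extra information that the nearest point is an endpoint of the arc (information it does not actually use elsewhere), while yours is more robust in that it would survive even if the minimiser moved along the arc in a non-smooth way. The small points you flag --- that $\Gamma_x^+$ lies in the open upper half-plane so $\theta$ is well defined and smooth, and that $0\notin\Gamma(x)$ so $|{\cdot}|$ is smooth where needed --- are exactly the ones that require checking, and your justifications for them are sound.
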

\begin{proof}
We know from Lemma~\ref{lem:confprop}  that  $x \mapsto \Gamma_x^+$, where  $\Gamma_x^+$ denotes the endpoint of the circular arc $\Gamma_x$ in the upper half-plane, as well as $x \mapsto f_x(x_0)$ are $C^{\infty}$--\,functions on~$(r,1)$. Hence
$$
 x \mapsto T_x(\Gamma^+_x)= \frac{\Gamma_x^+-f_x(x_0)}{1-f_x(x_0) \Gamma^+_x}
$$ is also $C^{\infty}$, in particular locally Lipschitz.

Note that for $x\in(r,x_0)$ the center of the circular arc $\Gamma(x)$ is not the origin; in fact, it belongs to~$(-1,0)$ because $f_x(x_0)>0$. It follows that $$(r,x_0]\ni x\mapsto \dist(0,\Gamma(x))=|T_x(\Gamma^+_x)|$$ is indeed locally Lipschitz.
\end{proof}

Now we are ready to \textit{prove Lemma~\ref{LM_phi_x}}.

\Step1 We first show that
\begin{equation}\label{EQ_derivative_new}
 \phi_x'(\xi)\to1\quad\text{uniformly on~}~[-x_0,0]\quad \text{as $~x\to x_0$}.
\end{equation}
Indeed, $\phi_x\circ f_{x_0}=T_x\circ f_x$ and hence
$$
\phi_x'(\xi)\,f_{x_0}'(f_{x_0}^{-1}(\xi))=\big(T_x\circ f_x\big)'(f_{x_0}^{-1}(\xi))\quad\text{for all~}~\xi\in[-x_0,0].
$$
By Lemma~\ref{lem:confprop}\,(A), the map $(\xi,x)\mapsto \big(T_x\circ f_x\big)'(\xi)$ is of class~$C^\infty$ on the set $$\big\{(\xi,x)\colon \xi\in(r^2/x,1/x),\,x\in(r,1)\big\}.$$
Since $T_{x_0}=\id_\UD$, it follows that, as $x\to x_0$, $\big(T_x\circ f_x\big)'\to f'_{x_0}$ uniformly on
 $[r,x_0]={f_{x_0}^{-1}\big([-x_0,0]\big)}$.
It only remains to observe that $f'_{x_0}$ does not vanish on~$[r,x_0]$.

\Step2 Let $x \in (r,1)$ and denote by  $$q(x):=\phi_x(-x_0)=T_x(-x)=-\frac{x+f_x(x_0)}{1+f_x(x_0)x} $$
the point of intersection of $\Gamma(x)$ with the real line.  Lemma~\ref{lem:confprop}  shows that $q : (r,1) \to \R$ is differentiable with $$q'(x_0)=-\big(1-(1-x_0^2) f_{x_0}'(x_0)\big) >0\quad\text{and}\quad q(x_0)=-x_0.$$
Hence, there is $x_1 \in (r,x_0)$ such that
$ \phi_x(-x_0)=q(x)<-x_0$ for all $x \in (x_1,x_0)$. In particular, for any $x \in (x_1,x_0)$ we can define
$$ \delta(x):=\sup \big\{ \alpha \in (0,x_0)\,\colon \phi_x(\xi)<\xi ~\text{~for all~}~ \xi \in [-x_0,-x_0+\alpha) \big\}>0 \, .$$
We claim that
\begin{equation}
\label{EQ_delta-asympt_new}
\lim \limits_{x \nearrow x_0} \frac{\delta(x)}{|x-x_0|}=+\infty \, .
\end{equation}
In order to prove this, fix an arbitrary $\varepsilon_1>0$. Thanks to~\eqref{EQ_derivative_new}, we can find $x_2 \in (x_1,x_0)$ such that
$\phi_x'< 1+\varepsilon_1$ on $[-x_0,0]$ for all $x \in (x_2,x_0)$. Then, for any fixed $x\in(x_2,x_0)$ and any $\xi \in [-x_0,0]$ satisfying $\xi \le -x_0-\big(\phi_x(-x_0)+x_0\big)/\varepsilon_1,$
we have
\begin{eqnarray*}
  \phi_x(\xi)&=& \phi_x(-x_0) + {\int\limits_{-x_0}^{\xi}\! \phi_x'(s) \di s}\\ & <& \phi_x(-x_0)+ (1+\varepsilon_1) (\xi+x_0) \\[.5ex] &\le &   \phi_x(-x_0)+(\xi+x_0)-(\phi_x(-x_0)+x_0)\\[.5ex] &= &\xi \, .
\end{eqnarray*}
This shows that
$$ \delta(x) \ge -\frac{\phi_x(-x_0)+x_0}{\varepsilon_1} \quad \text{ for all } x \in (x_2,x_0) \, ,$$
and therefore, recalling that $x_0=-\phi_{x_0}(-x_0)$, we have
$$ \liminf \limits_{x \nearrow x_0} \frac{\delta(x)}{x_0-x} \ge \frac{1}{\varepsilon_1} \liminf \limits_{x \nearrow x_0}
\frac{\phi_{x_0}(-x_0)-\phi_x(-x_0)}{x_0-x} =\frac{q'(x_0)}{\varepsilon_1} \, . $$

Since $q'(x_0)>0$ as we have observed above and since $\varepsilon_1>0$ can be chosen arbitrarily small, the latter inequality implies (\ref{EQ_delta-asympt_new}).

\Step3 Since by Lemma~\ref{LM_dist-Lip},  $x \mapsto \dist(0,\Gamma(x))$ is locally Lipschitz and  ${\dist(0,\Gamma(x_0))=x_0}$, there exist a constant ${M>0}$ and a point ${x_3 \in (r,x_0)}$ such that
$$
 \dist(0,\Gamma(x))>x_0-M(x_0-x)\quad \text{for all~}~ {x \in (x_3,x_0)}.
$$
In view of (\ref{EQ_delta-asympt_new}), there exists $x^*\in (x_3,x_0)$ such that
$$ M(x_0-x^*) \le \min \{\varepsilon,\delta(x^*)\}=:\delta \, , $$
and with this choice of $\delta$ both conditions  (i) and (ii) hold by construction. \qed

\subsection{Non-degenerate case}\label{SS_non-degenerate}
As we have seen in the previous section, for any $r\in(0,1)$ and any $x_0\in(\sqrt r,1)$, there exists $\zeta^*\in f_{x_0}(\A_r)$ such that formula~\eqref{EQ_conjectured} in Conjecture~\ref{CNJ} fails for $\Omega:=f_{x_0}(\A_r)\setminus\{\zeta^*\}$ and ${z:=0}$.

We will now use Theorem~\ref{TH_shrinking-conv} to show that for any $m\ge 3$, there exist $m-2$ pairwise disjoint closed non-degenerate arcs $\Gamma_2,\ldots,\Gamma_{m-1}\subset f_{x_0}(\A_r)$ located on circles centered at the origin, such that formula~\eqref{EQ_conjectured} fails also for
$$
\Omega:=f_{x_0}(\A_r)\,\big\backslash\bigcup_{j=2}^{m-1}\Gamma_j=\UD\,\big\backslash\bigcup_{j=1}^{m-1}\Gamma_j \quad\text{and}\quad{z:=0},
$$
where we set ${\Gamma_1:=\Gamma_{x_0}}$ and $\Gamma_0:=\partial\UD$.

To this end we consider $m-2$ sequences of arcs $(\Gamma^j_n)_{n\in\Natural}$, $j=2,\ldots,m-1$, such that
for each $n\in\Natural$ fixed, $\Gamma^2_n,\ldots,\Gamma^{m-1}_n\subset f_{x_0}(\A_r)$ are pairwise disjoint closed non-degenerate arcs located on circles centered at the origin and $\Upsilon_n:=\bigcup_{j=2}^{m-1}\Gamma^j_n\subset\UD(\zeta^*,1/n)$.
Denote ${\Omega_0:=f_{x_0}(\A_r)}$. Further, for each ${n\in\Natural}$, let ${\Omega_n:=\Omega_0\setminus\Upsilon_n}$
and denote by $f^{z,j}_n$, with ${z\in\Omega_n}$ and ${j=0,\ldots,m-1}$, the conformal mappings of $\Omega_n$ onto circularly slit disks, as introduced in Remark~\ref{RM_canonica-map-existence}. The slit disk mappings of the domain~$\Omega_0$ will be denoted, as in Section~\ref{SS_degenerate}, by $f_{z,0}$ and $f_{z,1}$.

Clearly, $f^{0,0}_n=\id_{\Omega_n}$. Taking into account that $\phi_{x^*}$ is holomorphic at~$\zeta^*$, it is easy to see that, as~${n\to+\infty}$,
$$
 \dist\big(0,\partial\Omega_n\big)\to \dist\big(0,\partial\,(\Omega_0\setminus\{\zeta^*\})\big)~\text{~and~}~ \dist\big(0,\partial\phi_{x^*}(\Omega_n)\big) \to \dist\big(0,\partial\phi_{x^*}(\Omega_0\setminus\{\zeta^*\}\big).
$$
Hence in view of~\eqref{EQ_main-for-degenerate-case}, for all $n\in\Natural$ large enough we have
\begin{equation}\label{EQ_0}
\dist\big(0,\partial f^{0,0}_n(\Omega_n)\big)=\dist\big(0,\partial\Omega_n\big)<\dist\big(0,\partial\phi_{x^*}(\Omega_n)\big).
\end{equation}

By relation~\eqref{EQ_shrinking-dist1} in Theorem~\ref{TH_shrinking-conv} applied with $\Gamma:=\Gamma_1=\Gamma_{x_0}$, $\dist\big(0,\partial f^{0,1}_n(\Omega_n)\big)\to \dist\big(\partial f_{0,1}(\Omega_0\setminus\{\zeta^*\})\big)$ as ${n\to+\infty}$. Therefore, in accordance with \eqref{EQ->|zeta-star|} and~\eqref{EQ-<|zeta-star|}, for all $n\in\Natural$ large enough we also have
\begin{equation}\label{EQ_1}
\dist\big(0,\partial f^{0,1}_n(\Omega_n)\big)<\dist\big(0,\partial\phi_{x^*}(\Omega_n)\big).
\end{equation}

Finally, for each $j=2,\ldots,m-1$, thanks to relation~\eqref{EQ_shrinking-dist2} in Theorem~\ref{TH_shrinking-conv},
\begin{equation}\label{EQ_j}
\dist\big(0,\partial f^{0,j}_n(\Omega_n)\big)<\dist\big(0,\partial\phi_{x^*}(\Omega_n)\big)
\end{equation}
provided $n\in\Natural$ is large enough.
From~\eqref{EQ_0}, \eqref{EQ_1}, and \eqref{EQ_j}, it immediately follows that Conjecture~\ref{CNJ} fails for $\Omega:=\Omega_n$ and ${z:=0}$ if $n\in\Natural$ is sufficiently large. This completes the proof of Theorem~\ref{TH_tripply}.\qed

\end{document}